\DeclareMathAlphabet{\mathpzc}{OT1}{pzc}{m}{it}
\newcommand{\ind}{\mathbf{1}}
\newtheorem{thm}{Theorem}[section]
\newtheorem{cor}[thm]{Corollary}
\newtheorem{prop}[thm]{Proposition}
\newtheorem{lem}[thm]{Lemma}
\newtheorem{lemma}[thm]{Lemma}
\theoremstyle{definition}
\newtheorem{defn}[thm]{Definition}
\newtheorem{ex}[thm]{Example}
\newtheorem{ass}[thm]{Assumption}
\numberwithin{equation}{section}
\newcommand{\e}{\varepsilon}
\newcommand{\sdn}[1]{{\color{red}\ttfamily\upshape\small[#1]\color{black}}}
\title{Intermediate disorder for directed polymers with space-time correlations}
\author{Shalin Parekh}
\date{}
\begin{document}

\maketitle
\vspace{-0.2 in} 
\begin{abstract}
    We revisit a result of Hairer-Shen \cite{HS} on polymer-type approximations for the stochastic heat equation with a multiplicative noise (SHE) in $d=1$. We consider a more general class of polymer models with strongly mixing environment in space and time, and we prove convergence to the It\^o solution of the SHE (modulo shear). The environment is not assumed to be Gaussian, nor is it assumed to be white-in-time. Instead of using regularity structures or paracontrolled products, we rely on simpler moment-based characterizations of the SHE to prove the convergence. However, the price to pay is that our topology of convergence is weaker. 
\end{abstract}

\section{Introduction}

A directed polymer in a random environment is a probability measure on paths which re-weights a simple random walk path by some Boltzmann weight that is given by a sum of random variables along the path. These random variables are typically chosen to be independent and identically distributed (IID) at distinct space-time sites. Directed polymers were introduced in a physics paper \cite{HH85} and have since received much attention from the mathematics community for connections to disordered systems, Kardar-Parisi-Zhang universality, and stochastic PDEs. See \cite{Cor12, Qua11, Com17} and related expositions. 

In each spatial dimension, these polymers have a critical temperature that serves as a boundary between weak-disorder (diffusive path behavior) and strong-disorder (superdiffusive path behavior). A lot of important information is contained in the \textit{partition function} of such a path measure. In spatial dimension $d=1$, a paper of \cite{AKQ14a} first proved that under a certain \textit{intermediate-disorder} scaling, this partition function converges to a stochastic PDE called the multiplicative-noise stochastic heat equation. The main observation of \cite{AKQ14a} is that the inverse temperature scales as $O(N^{-1/4})$ in order to observe this intermediate-disorder regime, if time and space are taken to be $O(N)$ and $O(N^{1/2})$ respectively.

That intermediate-disorder result has been further studied and generalized in a variety of works and settings. The particular setting of interest for this paper is the case where the random variables of the environment are not IID but have some weak correlations in both space and time. The main idea is that such a choice of weak \textit{microscopic} space-time correlations should not change the \textit{macroscopic} behavior of the system. This type of problem has already been studied in \cite{Hai13, Gub, HS, HL18, PR19} using regularity structures or paracontrolled products, and in \cite{GT} using more probabilistic methods. Our goal here is to recover and generalize some of those results using a different approach. The non-Gaussian approximation result from \cite{HS} will be the main inspiration, but our central focus will be on polymer models rather than stochastic PDEs. Our main result (Theorem \ref{mr}) focuses on a discrete lattice-based model, but we explain in the next section how our methods also easily apply to semi-discrete or continuous polymer models, including stochastic PDEs whose solutions admit a Feynman-Kac representation, which is indeed the case in \cite{HS}.

Throughout this paper, we are going to fix some probability space $(\Omega, \mathcal F, \mathbb P)$. Let $\omega:= \{\omega_{t,x}\}_{(t,x)\in\mathbb Z^2}$ denote \textbf{the environment}, where $\omega_{t,x}$ are random variables defined on this common probability space $(\Omega, \mathcal F, \mathbb P)$. For any subset $A \subset \mathbb Z^2$ define the $\sigma$-field $\mathcal F_A:= \sigma( \{\omega_{t,x}:(t,x)\in A\})$. We also assume that the probability space admits a $\mathbb Z$-valued random walk $(R(t))_{t\in \mathbb Z_{\ge 0}}$ that is \textit{independent of the environment} $\omega$. In other words, the probability measure splits as $\mathbb P = P^\omega\otimes \mathbf P_{\mathrm{RW}}$, where $P^\omega$ denotes the law of the environment and $\mathbf P_{\mathrm{RW}}$ denotes the law of the random walk. We assume that the $\mathbb Z$-valued random walk $(R(t))_{t\in \mathbb Z_{\ge 0}}$ starts at the origin, is \textit{aperiodic}, and has \textit{bounded jumps of variance 1 and mean 0}.

\begin{ass}\label{ass1} Assume that the environment $\omega$ is \textit{strictly stationary in both variables}. Assume $\omega_{0,0}$ is deterministically bounded, and of mean zero. Also assume that $\omega$ has \textit{finite-range dependence}: there exists $M>0$ such that for any two subsets $A,B \subset \mathbb Z^2$, the $\sigma$-fields $\mathcal F_A,\mathcal F_B$ are independent for $\mathrm{dist}(A,B)>M$, where $\mathrm{dist}(A,B):= \min\{|x-y|:x\in A, y\in B\}.$
\end{ass}
Note that we do not assume time-independence of $\omega$, which is very often used as a simplifying assumption in the literature. Define the \textbf{directed polymer partition function} 
\begin{equation} \label{zn}Z^N_{s,t}(x,y):= \mathbb E \big[e^{N^{-1/4} \sum_{u=0}^{t-s} \omega_{t-u,y+R(u)} }\ind_{\{R(t-s)=x-y\}}\big| \omega \big], 
\end{equation}where $s,t,x,y\in \mathbb Z $ with $s\le t$, and $N\in \mathbb N$. 
The expectation is over the random walk path $(R(t))_{t\in \mathbb Z_{\ge 0}}$, conditional on the environment $\omega$ which we again emphasize is always assumed to be \textit{independent} of the random walk $R$. 
Thus \eqref{zn} can be viewed as a weighted sum over paths from $(s,x)$ to $(t,y)$, with the weight being determined by the environment $\omega$.

\begin{thm}\label{mr}
    Let $\omega=\{\omega_{t,x}\}_{(t,x)\in\mathbb Z^2}$ be an environment satisfying Assumption \ref{ass1}, and let $Z^N$ be defined from $\omega$ as in \eqref{zn}. There exist real constants $c_N, v,$ and $\sigma$ such that the following holds true. For $s,t \in N^{-1}\mathbb Z$ and $x,y\in N^{-1/2}\mathbb Z$, define $$\mathcal Z^N_{s,t}(x,y):=N^{1/2}\cdot e^{-c_N (t-s)}\cdot Z^N_{Ns,Nt}(N^{1/2}x, N^{1/2} y).$$ 
    Then $\mathcal Z^N$ converges in law as $N\to \infty$, as a measure-valued stochastic flow. The limit is given by the It\^o solution of the stochastic PDE 
    \begin{equation}\label{she}\tag{SHE}\partial_t U (t,x)= \tfrac12 \partial_x^2 U(t,x) + v\partial_x U(t,x)+  \sigma U(t,x)\xi(t,x),\;\;\;\;\;\; t\ge 0 ,x\in \mathbb R,\end{equation}
    where $\xi$ is a Gaussian space-time white noise. More precisely, the 
    convergence is in the following integrated sense. Fix $m\in \mathbb N$ and deterministic functions $f_j\in C_c^0(\mathbb R^2)$, for $1\le j \le m$. Consider any sequences $s_j^N<t_j^N$ with $s_j^N,t_j^N\in N^{-1}\mathbb Z$, and assume $s_j^N\to s_j$ and $t^N_j \to t_j$, as $N\to\infty$ for $1\le j \le m$. Then one has the joint convergence in distribution as $N\to \infty$: $$\bigg( N^{-1} \sum_{x,y\in N^{-1/2} \mathbb Z} f_j(x,y) \mathcal Z^N_{s_j^N,t_j^N}(x,y)  \bigg)_{1\le j \le m} \stackrel{(d)}{\longrightarrow}\bigg( \int_{\mathbb R^2} f_j(x,y) U_{s_j,t_j} (x,y) dxdy \bigg)_{1\le j \le m}. $$ 
    Here $U_{s,t}$ on the right side are the propagators for \eqref{she}, meaning that for each $s,x\in \Bbb R$ the function $(t,y)\mapsto U_{s,t}(x,y)$ is a.s the It\^o solution of \eqref{she} started at initial time $t=s$ from Dirac initial condition $y\mapsto \delta_x(y)$, all coupled via the same realization of $\xi$. \end{thm}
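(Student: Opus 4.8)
The plan is to establish convergence of $\mathcal Z^N$ as a measure-valued flow via a moment-based martingale-problem characterization of the SHE propagators, following the spirit of Hairer–Shen but replacing regularity-structure machinery with classical discrete-to-continuum analysis. The first step is to identify the correct constants: the drift-type constant $c_N$ must absorb the exponential growth coming from $\mathbf E[e^{N^{-1/4}\omega}]$ summed along the path, the shear velocity $v$ arises from the interaction between the asymmetry of the correlation structure of $\omega$ and the random-walk increments (hence "modulo shear"), and $\sigma^2$ should emerge as an effective variance obtained by summing the space-time covariances $\sum_{(u,z)} \mathrm{Cov}(\omega_{0,0},\omega_{u,z})$, suitably weighted by the heat kernel — this is where finite-range dependence is crucially used, since it makes these sums absolutely convergent and ultimately produces a \emph{local} (white) limiting noise. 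I would make this precise by a Girsanov-type change of measure on the random walk, writing $\mathcal Z^N$ in terms of a tilted walk plus a mean-zero multiplicative fluctuation field.

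\smallskip
\noindent The second and central step is a chaos-expansion / moment computation. For the integrated quantities $\langle f_j, \mathcal Z^N_{s_j^N,t_j^N}\rangle$ I would expand $e^{N^{-1/4}\sum \omega}$ and, after subtracting $c_N$, organize the result as a sum over the number of "clusters" of environment sites visited by the walk. Finite-range dependence means distinct clusters are independent, so the leading contribution comes from configurations where the walk returns to within distance $M$ of previously visited sites at $k$ well-separated times; by a local-CLT estimate for the bounded-jump aperiodic walk, each such near-self-intersection contributes a factor $\sim \sigma^2 N^{-1}\cdot(\text{heat kernel})$, and summing over the $O(N^k)$ choices of intersection times in a bounded region reproduces exactly the $k$-th term of the Wiener chaos expansion of the SHE propagator. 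Higher-order corrections (three or more sites in a cluster, or clusters of size $\ge 2$ appearing $o(1)$ times) must be shown to vanish; controlling these error terms — and in particular verifying that the discrete heat kernel substitutions converge with enough uniformity to pass to the limit inside the infinite chaos sum — is, I expect, the main obstacle, and it will require moment bounds on $\mathcal Z^N$ uniform in $N$ (e.g. an $L^p$ bound following from hypercontractivity-type estimates for the bounded environment, or a direct combinatorial bound on the chaos series).

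\smallskip
\noindent The third step is to upgrade convergence of the one-time-interval integrated functionals to the joint statement over $j=1,\dots,m$ and to the level of the measure-valued flow. Joint convergence follows from the same expansion applied to products $\prod_j \langle f_j,\mathcal Z^N_{s_j^N,t_j^N}\rangle$: the clusters now range over the union of the $m$ walk segments, and cross-terms between segments with overlapping time windows produce precisely the joint moments of the correlated family $(U_{s_j,t_j})$. To conclude that the limit is genuinely the Itô solution of \eqref{she}, I would check that the limiting moments satisfy the integral (mild/Duhamel) equation characterizing the SHE propagators — equivalently, that they solve the appropriate hierarchy of PDEs with the correct on-diagonal (Itô, not Stratonovich) singularity — which pins down the limit uniquely since the SHE moments grow slowly enough to be determined by their moments. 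Because the topology claimed is the weak one of convergence of all such integrated functionals, no tightness in a strong function space is needed; finite-dimensional convergence of the $\langle f_j,\mathcal Z^N\rangle$ together with uniqueness of the limiting object suffices, which is exactly the trade-off advertised in the abstract.
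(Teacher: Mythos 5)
Your proposal diverges from the paper's actual route in its central computation, and it contains one step that is genuinely false as stated.

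First, the identification step fails. You write that the limit "pins down \ldots uniquely since the SHE moments grow slowly enough to be determined by their moments." They do not: the $k$-th moment of \eqref{she} with delta (or bounded) initial data grows like $e^{ck^3}$ in $k$ (this is the standard intermittency computation via the attractive delta Bose gas / the local-time formula $\mathbb E[U^k]=\mathbf E[e^{\sigma^2\sum_{i<j}L_0^{W^i-W^j}}]$), which violates Carleman's condition, so the law of $U$ is \emph{not} determined by its moments. This is precisely the obstruction the paper has to work around: it invokes a flow-based classification theorem (from \cite{Par24a}, in the spirit of \cite{Ts24}) which characterizes the propagators of \eqref{she} by three conditions --- independence of $M_{s_j,t_j}$ over disjoint time intervals, an approximate Chapman--Kolmogorov (propagator) identity verified in $L^2$, and the moment formula. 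Without the first two structural conditions, moment convergence alone does not yield the claimed convergence in law, so your Step 3 has a genuine gap.

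Second, your central computation is a discrete Wiener-chaos / cluster expansion of $e^{N^{-1/4}\sum\omega}$, which is the AKQ route and works cleanly only when the weights are independent in time: then the renormalization factors site by site and the chaos components are orthogonal. With time-correlated $\omega$ the chaos terms are not orthogonal, $c_N$ does not factor over sites (the paper shows $c_N=c_2^*N^{1/2}+c_3^*N^{1/4}+c_4^*+O(N^{-1/4})$, with $c_3^*,c_4^*$ coming from third and fourth joint cumulants of $\eta_t=\omega_{-t,R(t)}$), and the shear $v$ is carried by a \emph{path-dependent diagonal} contribution $\sum_{r_1,r_2}\mathbf f(r_1-r_2,R(r_1)-R(r_2))$ whose fluctuation converges to a Brownian motion correlated with the walk --- this is what produces the Girsanov tilt and hence $v\partial_xU$. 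The paper instead conditions on the $k$ walk paths, Taylor-expands the \emph{conditional cumulant generating function} in $\lambda=N^{-1/4}$ order by order (using the cumulant bound $|\kappa_n|\le C^nn!s$ from \cite{Fer} to kill orders $n\ge5$), and proves a joint invariance principle sending the off-diagonal second-order terms to intersection local times ($\sigma^2$) and the diagonal ones to the tilting Brownian motion ($v$, $\gamma$), with exponential moment bounds supplying uniform integrability. Your cluster/local-CLT heuristic does correctly locate where $\sigma^2$ comes from, but as written it neither extracts the shear nor controls the non-factorizing renormalization, and these are the two points where the time correlations actually bite.
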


In addition to the integrated version above, we will also prove the following version with one fixed endpoint. Fix $m\in \mathbb N$ and deterministic functions $\phi_j\in C_c^0(\mathbb R)$, for $1\le j \le m$. Consider any sequences $s_j^N<t_j^N$ with $s_j^N,t_j^N\in N^{-1}\mathbb Z$, and assume $s_j^N\to s_j$ and $t^N_j \to t_j$, as $N\to\infty$ for $1\le j \le m$. Also consider $y_j^N\in N^{-1/2}\Bbb Z$ such that $y_j^N\to y\in \Bbb R$ as $N\to \infty$. Then one has the joint convergence in distribution as $N\to \infty$: 
\begin{align}\bigg( N^{-1/2} \sum_{x\in N^{-1/2} \mathbb Z} \phi_j(x) \mathcal Z^N_{s_j^N,t_j^N}(x,y_j^N)  \bigg)_{1\le j \le m} \stackrel{(d)}{\longrightarrow}\bigg( \int_{\mathbb R} \phi_j(x) U_{s_j,t_j} (x,y_j) dx \bigg)_{1\le j \le m}. \label{endp}\end{align}

Let us write the expressions for the constants $c_N,v,$ and $\sigma. $ First, we have \begin{equation}\label{sigma}\sigma^2: =\sum_{(t,x) \in \mathbb Z^2} \mathbb E[\omega_{0,0}\omega_{t,x}].\end{equation}
Let $\eta_t:= \omega_{-t,R(t)}$ where (as always) the random walk $R$ is independent of the environment $\omega$, and note that $\eta$ is a strictly stationary sequence in $t$ with finite-range dependence. Then  
\begin{equation}\label{cn}c_N:= \log \mathbb E[ e^{N^{-1/4}\sum_{s=0}^{N} \eta_s}]=\log \sum_{x\in \mathbb Z} \mathbb E[ Z_{0,1}^N(x,0)] .\end{equation}
In Section 3, we will show that $c_N$ can actually be written as $c_2^*N^{1/2} + c_3^* N^{1/4} + c_4^* +O(N^{-1/4})$ for some constants $c_j^*$ which are determined by joint cumulants of order $j$ of the sequence $(\eta_s)_s$, which is similar to the expression for the height shift/renormalization observed in \cite{HS} for a related model. See Lemma \ref{3.1} for the exact expressions of these constants $c_j^*$. Finally, letting $\mathbf f(t,x):=\mathbb E[\omega_{0,0}\omega_{t,x}]$, the shearing constant in Theorem \ref{mr} is given by 
\begin{equation}\label{v}v:=\tfrac12 
   \sum_{k,j\in\mathbb Z} \mathrm{Cov} \big( R(1)-R(0), \mathbf f( k, R(k+j) - R(j))\big).
\end{equation}
The covariance is with respect to a two-sided random walk path $(R(t))_{t\in \mathbb Z}$ distributed as $\mathbf P_{\mathrm{RW}}$. We remark that $v=0$ in many interesting cases. It is certainly zero if the environment $\omega$ is assumed to be independent in time. As pointed out in \cite{HS}, it is also zero if one assumes that $R$ is a symmetric random walk together with the even condition $\mathbf f(t,x) = \mathbf f(t,-x)$, which is clear by substituting $R\to-R$ in the covariance formula for $v$.

Note that \eqref{sigma} and \eqref{v} are actually finite sums by Assumption \ref{ass1}, but we always write such expressions as sums over all $\mathbb Z$. As first observed by \cite{HS}, the appearance of the shearing term $v\partial_xU(t,x)$ in \eqref{she} is perhaps one of the more interesting aspects of the result, since it only appears as an artifact when there is time correlation of the weights $\omega$. In \cite{HS} the authors obtained this term from algebraic renormalization considerations in the theory of regularity structures, but in this paper we will derive it via Girsanov's theorem instead. It is called a shearing term since $(t,x)\mapsto U(t,x-vt)$ solves \eqref{she} with $v=0.$

\subsection{More general models and context}

Here we will discuss easy generalizations of our approach, and overall context and comparisons with the known results. In the main result, we required the underlying space-time lattice to be $\mathbb Z^2$, we required $\omega_{0,0}$ to be deterministically bounded, and we required the random walk to be aperiodic. These conditions may seem overly restrictive, and indeed it is the case that one can consider more general models, which we now explain. 

The aperiodicity assumption on the random walk can of course be relaxed, however one would need to modify the definition \eqref{zn} of $Z^N_{s,t}(x,y)$ appropriately. Rather than defining it for all $(s,t,x,y)\in \Bbb Z^4$ with $s\le t$, one would only define it for those $(x,y)$ lying in some proper sublattice of $\Bbb Z^2$ that depends on $(s,t)$, and zero elsewhere. For example, in the case of nearest-neighbor simple random walk, one should define it only on those $(s,t,x,y)$ such that $t-s$ and $x-y$ have the same parity, and zero elsewhere. Then the rescaled version $\mathcal Z^N$ in Theorem \ref{mr} should have an extra factor of 2 to compensate. Without these periodicity modifications, the convergence result of $\mathcal Z^N$ to \eqref{she} would simply be false.


We cannot relax the finite-range dependence of $\omega$ (e.g. to exponentially fast $\alpha$-mixing or $\phi$-mixing \cite{bradley}) because this is needed to be able to apply the crucial result of \cite{Fer} as done in Lemma \ref{appx} to estimate the remainder. The boundedness of the weights $\omega$ is used for the same reason in the same lemma, but it can be relaxed to e.g. Gaussian tails using a truncation argument.

Generalization to non-lattice models is easily allowable with our methods, without any substantial modification needed in the proofs. For example, we can look at ``semi-discrete" polymers, where the underlying space-time lattice is $(t,x) \in \mathbb Z\times \mathbb R$, and where $\omega$ satisfies a similar mixing condition as in Assumption \ref{ass1} plus path continuity. In this case, the underlying random walk measure $\mathbf P_{\mathrm{RW}}$ should have an increment law that has a continuous and compactly supported density with respect to Lebesgue measure, mean zero, and variance one. The definition of $Z^N$ generalizes easily to this case, and one still has Theorem \ref{mr}, with the same proof and all constants being essentially the same, except that sums over $x\in \mathbb Z$ would now be replaced by integrals over $x\in\mathbb R$ in the expressions \eqref{sigma}, \eqref{cn}, and \eqref{v}. One can also consider the opposite choice of ``semi-discrete" polymer, where $(t,x)\in \mathbb R\times \mathbb Z$, and the random walk is a continuous-time process on a discrete lattice. In this case, the definition of $Z^N$ in \eqref{zn} should have a time-integral as opposed to a sum, but our methods still apply and give similar expressions for the constants, with time-sums replaced by time-integrals. Likewise, our approach generalizes to the case where the underlying space-time ``lattice" is $(t,x) \in \mathbb R^2,$ but all the sums over $t,x\in \mathbb Z$ would now be replaced by integrals, in both the definition of $Z^N$ and in all of the constants $c_N,v,\sigma$. The path measure must be a standard Brownian motion in this case, and $\omega$ must be a.s. continuous on $\mathbb R^2$, but the result and the proof still apply for such a model. In particular, \textit{our methods do apply to the models considered in \cite{HS, GT}} since those SPDEs have Feynman-Kac representations in terms of Brownian-weighted polymers, very similar to the expression \eqref{zn} for $Z^N$ above.


Let us therefore explain some of the main differences of our approach with those of \cite{HS}, which is the inspiration and the closest existing result in the literature to our Theorem \ref{mr}, since they also consider nontrivial time correlations and non-Gaussian noises. The paper of \cite{HS} builds on the theory of regularity structures as developed in \cite{Hai13, Hai14}, working at the level of the logarithm of \eqref{she}: the so-called Kardar-Parisi-Zhang (KPZ) equation. Theorem 1.3 of their paper proves a result very similar to our Theorem \ref{mr}, where they also find renormalization constants $c_N$ and $v$ as above. However, there are a few differences that we now explain. Firstly, their spatial domain is a circle, as opposed to the full real line or integer line. This is necessary for them to be able to apply the regularity structure machinery, since the KPZ equation has not been solved on $\mathbb R$ with regularity structures yet (though see \cite{HL18, PR19}). Another restriction of their approach is that they require the initial data to be quite regular. Furthermore, their approach is confined to \textit{continuum approximations} of \eqref{she} that are already given by stochastic PDEs, whereas we can consider \textit{lattice-based approximations} as in Theorem \ref{mr}, or even semi-discrete approximations as explained in the previous paragraph. It may be possible to generalize the approach of \cite{HS} to lattice approximations using discretization of regularity structures \cite{disc1,disc2}, but the effort would be longer and more technical. Despite these slight drawbacks, the result of \cite{HS} does have several major benefits over our methods. The first major advantage of \cite{HS} over our approach is that they are able to prove \textit{tightness} in a much stronger topology, namely the space of H\"older continuous functions in space-time. In our approach, we can obtain integrated results like Theorem \ref{mr} and pointwise results like \eqref{endp}, but there is no guarantee of continuity. Another major benefit of \cite{HS} is that their approach generalizes to other limiting stochastic PDEs which do not necessarily linearize under some simple transform such as Hopf-Cole (see \cite{HP15, CS, HQ18}), whereas we are restricted to linear equations such as \eqref{she}. Still another benefit of the \cite{HS} approach is that they can obtain \textit{joint} convergence of the environment $\omega$ and the field $\mathcal Z^N$ to the space time white noise $\xi$ and the solution of \eqref{she} driven by that same noise $\xi$, which is quite natural to ask. Finally, we require boundedness of the field $\omega$, whereas \cite{HS} states as a remark that they only need some high but finite number of moments.

Next let us explain the differences of our approach with that of \cite{GT}, which can also consider certain Gaussian polymers with nontrivial time-correlations of the noise, and gives a more direct probabilistic proof of convergence on the full line $\mathbb R$ that does not rely on regularity structures \cite{Hai14} or paracontrolled products \cite{GIP15}. To avoid the use of these theories, \cite{GT} heavily leverages the \textit{Gaussian structure} of the noise, and in particular uses Malliavin calculus and the Clark-Ocone formula to prove the convergence. This is a very strong restriction. Here, we make no use of Gaussianity, and instead we allow for very general fields of noises as long as the strong mixing condition of Assumption \ref{ass1} is satisfied. That being said, the methods of \cite{GT} for the Gaussian case should be robust enough to prove the tightness in a space of continuous functions, and in fact they do explicitly prove convergence in law for each individual value of $(s,t,x,y)$, jointly with the noise, which is still a stronger statement than our integrated statement in Theorem \ref{mr}.


Therefore, letting $\mathbb R^4_\uparrow:=\{(s,t,x,y): s<t\}$, the main open problem left unsolved here is to prove the \textit{tightness} of the field $Z^N$ in $C(\mathbb R^4_{\uparrow})$, with a succinct and direct probabilistic approach. In the case that the weights are \textit{independent-in-time}, this turns out to be fairly straightforward by writing out the Duhamel formula for the discrete system, thus obtaining a martingale-driven ``discrete SPDE," and then using the Burkholder-Davis-Gundy (BDG) inequality in an optimal way. In the case that the weights are not independent-in-time, the BDG inequality no longer applies since one loses the Markov property and thus the martingality. It is unclear if this problem can be easily fixed, or if it requires a more intricate analysis. This may be pursued later, in a future work. 

\subsection{Method of proof.}

The proof mostly elementary, and will rely on Taylor expansions of the cumulant generating function for the stationary sequence $\eta_t := \omega_{-t,R(t)}$ introduced just after Theorem \ref{mr}. Ultimately these cumulant expansions will yield formulas amenable to proving \textit{convergence of moments} of $Z^N$ from \eqref{zn} to those of $U$ from \eqref{she}, after which we will use a moment-based characterization of \eqref{she} inspired by work of \cite{Ts24} to complete the proof of Theorem \ref{mr}. Our proof is not similar to the proofs in \cite{HS} or in \cite{GT}, which had different ways of deriving all of the constants. In \cite{HS} the nontrivial renormalization constants $c_N$ and $v$ were interpreted as the correct ones needed to lift the noises in such a way as to obtain nontrivial ``probabilistic lifts" of the Gaussian noise in the $N\to\infty$ limit, while in \cite{GT} they were derived through an exact analysis of certain Brownian functionals. 
\\
\\
\textbf{Outline. } In Section 2, we rewrite the constant $c_N$ using a Taylor expansion, which allows us to estimate first moments of the process $Z_{s,t}^N(x,y)$. In Section 3, we calculate the higher moments in terms of expectations of certain exponential functionals of random walks. In Section 4, we prove some limit theorems for various processes that appear in the expressions for those higher moments. Finally in Section 5, we prove the main result by using the higher moment convergence results.
\\
\\
\textbf{Acknowledgement.} I thank Yu Gu for discussion and references. I acknowledge support by the NSF MSPRF (DMS-2401884).

\section{Renormalization coefficient and cumulant expansion}

Recall the renormalization constant $c_N:=\log \mathbb E[ e^{N^{-1/4}\sum_{s=0}^{N} \eta_s}]$ from \eqref{cn} in the introduction. These constants $c_N$ will be instrumental in the analysis, but they are hard to work with directly, thus we prove some nice properties about them in this section, by expanding some cumulants. First we establish some notations. 

\begin{defn}
    For a random variable $X$, let $\kappa_n(X)$ denote the $n^{th}$ cumulant of $X$. Likewise for random variables $X_1,...,X_n$ all defined on the same probability space, let $\kappa_{(n)}(X_1,...,X_n)$ denote the joint cumulant of the random variables $X_1,...,X_n$. 
\end{defn}

In other words, $$\kappa_{(n)}(X_1,...,X_n) = \partial_{t_1}\cdots \partial_{t_n} \big|_{(t_1,...,t_n)=(0,...,0)} \log \mathbb E[e^{t_1X_1+..+t_nX_n}],$$ and $\kappa_n(X) = \kappa_{(n)}(X,...,X)$. We remark that $\kappa_{(n)}$ is linear in each coordinate, and symmetric under interchange of coordinates, which will be very important. For example $\kappa_{(1)}(X) = \mathbb E[X],$ $\kappa_{(2)}(X,Y) = \mathrm{Cov}(X,Y),$ $\kappa_{(3)}(X,Y,Z) = \mathbb E[XYZ] - \mathbb E[X] \mathbb E[YZ] -\mathbb E[Y] \mathbb E[XZ] - \mathbb E[Z] \mathbb E[XY] + 2\mathbb E[X]\mathbb E[Y]\mathbb E[Z]$, and so on. Another important property is that if $X_1$ is independent of $(X_2,...,X_n)$ then we have that $\kappa_{(n)} (X_1,...,X_n)=0$, in other words, \textit{just one} of the $X_i$ needs to be independent of the rest of the collection for the joint cumulant to vanish. This is a crucial observation going forward.

\begin{defn}\label{fn} Recall the stationary sequence $\eta_t:= \omega_{-t,R(t)}\;\; (t\in \mathbb Z)$ from the introduction, where $\omega$ is the environment and $R$ is an independent random walk. 
Let $$f_{\mathrm{cgf}}(N,\lambda):= \log \mathbb E[ e^{\lambda \sum_{s=0}^N \eta_s }].$$
\end{defn}

As a cumulant generating function $f_{\mathrm{cgf}}(N,\lambda)$ can be expanded in $\lambda$ as 
$$f_{\mathrm{cgf}}(N,\lambda) = \sum_{n=1}^\infty \frac1{n!}\kappa_n(\eta_1+...+\eta_N) \lambda^k $$ where $\kappa_n$ are the order-$n$ cumulants. For example, by the stationarity and mean-zero property of the fast-mixing sequence $\eta$, we can write out the first four cumulants in the expansion as follows:
\begin{align*}
    \kappa_1( \eta_1+...+\eta_N)&= \mathbb E [ \eta_1+...+\eta_N]=0,\\\kappa_2( \eta_1+...+\eta_N) &= \sum_{i,j=1}^N \mathrm{Cov}(\eta_i,\eta_j)  = \sum_{i=-N}^N(N-|i|) \mathbb E[ \eta_0\eta_i]\\
    \kappa_3(\eta_1+...+\eta_N) &= \sum_{i,j,k=1}^N \kappa_{(3)}( \eta_i,\eta_j,\eta_k) = \sum_{i,j,k=1}^N \mathbb E[\eta_i\eta_j\eta_k]\\ \kappa_4( \eta_1+...+\eta_N) &= \sum_{i,j,k,\ell=1}^N \kappa_{(4)}(\eta_i,\eta_j,\eta_k,\eta_\ell) \\&= \sum_{i,j,k,\ell=1}^N \mathbb E[ \eta_i\eta_j\eta_k\eta_\ell] - \mathbb E[\eta_i\eta_j]\mathbb E[\eta_k\eta_\ell] -\mathbb E[ \eta_i\eta_\ell]\mathbb E[\eta_j\eta_k]-\mathbb E[ \eta_i\eta_k ]\mathbb E[\eta_j\eta_\ell].
\end{align*}

\begin{lem}\label{3.1}
    As $N\to\infty$ we have the following convergences 
    \begin{align*}
        \frac1N \kappa_2(\eta_1+...+\eta_N) &\to \sum_{s=-\infty}^\infty \mathbb E[\eta_0\eta_s] =:2!c_2^*, \\\frac1N \kappa_3(\eta_1+...+\eta_N) &\to\sum_{s,t=-\infty}^\infty \mathbb E[\eta_0\eta_s\eta_t ] =:3!c^*_3,\\ \frac1N \kappa_4(\eta_1+...+\eta_N) &\to \sum_{s,t,u=-\infty}^\infty \mathbb E[ \eta_0\eta_s\eta_t\eta_u] - \mathbb E[\eta_0\eta_s]\mathbb E[\eta_t\eta_u] -\mathbb E[ \eta_0\eta_u]\mathbb E[\eta_s\eta_t]-\mathbb E[ \eta_0\eta_t ]\mathbb E[\eta_s\eta_u] =:4!c^*_4.
    \end{align*}
    Moreover, these convergences occur at a rate of $O(1/N).$
\end{lem}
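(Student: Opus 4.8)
The plan is to prove all three convergences by the same mechanism: expressing each $\kappa_n(\eta_1+\cdots+\eta_N)$ as a sum over $n$-tuples of indices in $\{1,\dots,N\}$ of a joint cumulant $\kappa_{(n)}(\eta_{i_1},\dots,\eta_{i_n})$, then using (a) stationarity to replace the summand by a function of the differences $i_2-i_1,\dots,i_n-i_1$, and (b) finite-range dependence to ensure the resulting function is finitely supported, hence summable. First I would write, for fixed $n$,
\begin{align*}
\kappa_n(\eta_1+\cdots+\eta_N) = \sum_{i_1,\dots,i_n=1}^N \kappa_{(n)}(\eta_{i_1},\dots,\eta_{i_n}),
\end{align*}
using multilinearity and symmetry of $\kappa_{(n)}$. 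By strict stationarity of $\eta$ in $t$, the summand depends only on $(i_2-i_1,\dots,i_n-i_1)$; call it $g_n(i_2-i_1,\dots,i_n-i_1)$. Now fix $i_1$ and let the other indices range: crucially, by finite-range dependence (Assumption \ref{ass1}), whenever one index $i_k$ is separated from the entire remaining block by more than $M$, the random variable $\eta_{i_k}$ is independent of the rest, so $\kappa_{(n)}=0$ — this is exactly the vanishing-of-joint-cumulant-when-one-coordinate-is-independent property emphasized right before Definition \ref{fn}. Consequently $g_n$ is supported on a bounded set (roughly $|i_k - i_1| \le (n-1)M$ for all $k$), and $\sum_{\mathbb Z^{n-1}} |g_n| < \infty$.

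Next I would carry out the counting. Reindexing the sum by $i_1$ and the shifts $a_k := i_k - i_1$, one gets $\kappa_n(\eta_1+\cdots+\eta_N) = \sum_{a_2,\dots,a_n} c_N(a_2,\dots,a_n)\, g_n(a_2,\dots,a_n)$, where $c_N(a_2,\dots,a_n)$ counts the number of valid choices of $i_1 \in \{1,\dots,N\}$ with all $i_1 + a_k \in \{1,\dots,N\}$; this count equals $N - (\max_k a_k^+ - \min_k a_k^-)$ when that is positive (with the convention $a_1 = 0$), and $0$ otherwise — exactly the pattern visible in the displayed $\kappa_2$ formula, $\sum_{i=-N}^N (N-|i|)\mathbb E[\eta_0\eta_i]$. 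Dividing by $N$: since $g_n$ has bounded support, for $N$ larger than some $N_0$ (depending only on $nM$) every $a$ in the support satisfies $c_N(a)/N = 1 - O(1/N)$ uniformly, so
\begin{align*}
\frac1N \kappa_n(\eta_1+\cdots+\eta_N) = \sum_{a_2,\dots,a_n} g_n(a_2,\dots,a_n) + O(1/N) = \sum_{a_2,\dots,a_n \in \mathbb Z} \kappa_{(n)}(\eta_0,\eta_{a_2},\dots,\eta_{a_n}) + O(1/N),
\end{align*}
the implied constant depending on $n$, $M$, and the uniform bound on $|\eta_0|$. Specializing $n=2,3,4$ and expanding the joint cumulants via the moment–cumulant relations (noting $\mathbb E[\eta_s]=0$ kills all partitions with a singleton block, which is why $\kappa_{(3)}(\eta_0,\eta_s,\eta_t)=\mathbb E[\eta_0\eta_s\eta_t]$ and why the order-$4$ term has precisely the three pair-pair subtractions shown) gives exactly the three limits $2!c_2^*$, $3!c_3^*$, $4!c_4^*$ claimed, with the stated $O(1/N)$ rate.

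The only genuine subtlety — and the step I would be most careful about — is justifying that the support of $g_n$ really is bounded using finite-range dependence when the $n$ indices need not form a single contiguous cluster: the correct statement is that $\kappa_{(n)}(\eta_{i_1},\dots,\eta_{i_n})$ vanishes as soon as the set $\{i_1,\dots,i_n\}$ can be partitioned into two nonempty subsets at mutual distance $>M$ (then the two sub-collections of $\eta$'s are independent, by the $\sigma$-field independence in Assumption \ref{ass1}, and a joint cumulant vanishes if the coordinates split into two independent groups). A short lemma to this effect — plus the observation that an index set with no such splitting has diameter at most $(n-1)M$ — closes the support argument; everything else is the elementary counting above. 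I would also remark that the boundedness of $\omega_{0,0}$ guarantees all these cumulants and moments are finite, so every manipulation is legitimate.
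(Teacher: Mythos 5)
Your proposal is correct and follows essentially the same route as the paper's (sketched) proof: expand $\kappa_n$ multilinearly into joint cumulants, use stationarity to reduce to a sum over index differences with coefficients of the form $N$ minus the diameter of the index set, and use the finite-range dependence of $\eta$ (asserted in the paper right after Theorem \ref{mr}) to truncate the sum, yielding the $O(1/N)$ rate. You in fact supply the details for $n=3,4$ that the paper dismisses as ``similar'' --- notably the splitting criterion for the vanishing of joint cumulants and the $(n-1)M$ diameter bound --- so no gap.
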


\begin{proof}
    We only sketch the proof for the second cumulant, with the higher cumulants being similar. Using the above expressions, we have that $$\frac1N \kappa_2(\eta_1+...+\eta_N) = \sum_{i=-N}^N (1-\frac{|i|}N) \mathbb E[\eta_0\eta_i],$$ thus we find that $$|c_2^* - \frac1N \kappa_2(\eta_1+...+\eta_N) | \leq \frac1N\sum_{i=-N}^N |i| \mathbb |E[\eta_0\eta_i]| \le C/N,$$ where we use $\mathbb E[\eta_0\eta_i] =0 
    $ for sufficiently large $|i|$, which in turn is thanks to the strong mixing and mean-zero property of the sequence $\eta$ (see Assumption \ref{ass1}). 
    As mentioned above, the proof for higher cumulants is similar.
\end{proof}

\begin{lemma}\label{appx}
    We have that $|\kappa_n(\eta_1+...+\eta_s)| \leq C^nn! s$ where $C$ is independent of $n,s \in \mathbb N$.
\end{lemma}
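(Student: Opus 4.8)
The plan is to reduce the claim to the standard cumulant estimate for bounded, finite-range-dependent sequences, after verifying that $\eta$ is such a sequence. The substantive analytic input is then the result of \cite{Fer}, and what remains for us is a combinatorial/probabilistic bookkeeping argument. Concretely, I would proceed in three steps: (i) expand $\kappa_n(\eta_1+\cdots+\eta_s)$ by multilinearity of the joint cumulant; (ii) show the resulting joint cumulants vanish unless the time-indices are clustered; (iii) combine this with a bound on each surviving joint cumulant.

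For (i), multilinearity of $\kappa_{(n)}$ gives $\kappa_n(\eta_1+\cdots+\eta_s)=\sum_{t_1,\dots,t_n=1}^{s}\kappa_{(n)}(\eta_{t_1},\dots,\eta_{t_n})$. For (ii), the key claim is: $\kappa_{(n)}(\eta_{t_1},\dots,\eta_{t_n})=0$ whenever the distinct values among $t_1,\dots,t_n$, listed in increasing order, have a consecutive gap exceeding $M$. Granting this, a surviving term forces the distinct indices to lie in an interval of length $\le M(n-1)$, so, choosing the smallest index first, the number of surviving $n$-tuples is at most $s\,(Mn+1)^{n}\le C^{n}n!\,s$ (using $n^{n}\le e^{n}n!$). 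Then in (iii) one needs $|\kappa_{(n)}(\eta_{t_1},\dots,\eta_{t_n})|$: the crude bound $\le C^n n!$ coming from $|\eta_t|\le\|\omega_{0,0}\|_{\infty}$ and the moment--cumulant formula would lose an extra factorial, so instead one invokes the tree/forest inequality for cumulants of variables carrying a bounded-degree dependency graph, in the quantitative form of \cite{Fer}, which delivers $|\kappa_n(\eta_1+\cdots+\eta_s)|\le C^n n!\,s$ in one stroke. (Alternatively, one may bound $|f_{\mathrm{cgf}}(s,\lambda)|\le C(\rho)s$ uniformly in $s$ for complex $\lambda$ with $|\lambda|\le\rho$ -- decoupling $\eta$ into well-separated blocks -- and read off the bound from Cauchy's estimate $|\kappa_n|=n!\,|[\lambda^n]f_{\mathrm{cgf}}|\le n!\,\rho^{-n}C(\rho)s$.)

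The one point genuinely requiring care -- and the main obstacle to a fully self-contained write-up -- is the vanishing statement in (ii), because the variables $\eta_t=\omega_{-t,R(t)}$ all involve the \emph{same} walk $R$, so the finite-range dependence of $\omega$ does not pass to $\eta$ for free. I would prove it by conditioning on $R$. Split the distinct indices as $A\sqcup B$ with $\max A<\min B$ and $\min B-\max A>M$, and partition $\{1,\dots,n\}$ accordingly. Given $R$, the subfamily indexed by $A$ is measurable for $\omega$ on the rows $\{-u:u\in A\}$ and the subfamily indexed by $B$ for $\omega$ on the rows $\{-u:u\in B\}$; any two such rows are more than $M$ apart in $\mathbb Z^2$, so by Assumption \ref{ass1} these $\omega$-restrictions are independent of each other and of $R$, giving conditional independence of the two subfamilies given $R$. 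Moreover, by spatial stationarity of $\omega$, the conditional law of the $A$-subfamily given $R$ depends on $R$ only through the increments $R(u+1)-R(u)$ for $u$ in a window around $A$ of radius $\le M$, and similarly for $B$; when $\max A<\min B$ these two windows of (i.i.d.) walk-increments are disjoint, hence the two conditional laws are independent random functionals of $R$. Integrating the conditional product law over $R$ therefore factorizes, so the $A$- and $B$-subfamilies are \emph{unconditionally} independent, and a joint cumulant of a family splitting into two independent nonempty subfamilies vanishes. This is also the precise sense in which $\eta$ is a stationary, bounded, finite-range-dependent sequence (e.g. $(\eta_u)_{u\le k}$ independent of $(\eta_u)_{u>k+M}$), as asserted after Theorem \ref{mr}, so the hypotheses of \cite{Fer} are met. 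Boundedness of $\omega$ is used only to control the moments of $\eta$, and as the paper notes it can be relaxed to Gaussian tails by truncation.
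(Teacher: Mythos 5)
Your proposal is correct and follows essentially the same route as the paper: reduce to the fact that $(\eta_t)_t$ is a bounded, stationary, finite-range-dependent sequence and then invoke Theorem 9.1.7 of \cite{Fer}, correctly recognizing that naive term-by-term cumulant bounds lose a factorial. The one place you go beyond the paper is in actually proving that $\eta_t=\omega_{-t,R(t)}$ inherits finite-range dependence (the paper asserts this without proof), and your argument -- conditional independence given $R$ from the row separation, plus independence of the two conditional laws via spatial stationarity and independent walk increments -- is sound; the only blemish is the parenthetical ``window of radius $\le M$,'' which is unnecessary (only the increments within $[\min A,\max A]$ and $[\min B,\max B]$ matter) and, taken literally, would require a gap $>2M$ for disjointness -- but either reading still yields some finite range of dependence, so the conclusion is unaffected.
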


\begin{proof} Despite the seemingly innocuous nature of the bound, this is where we need the restrictive assumptions of boundedness and finite-range-dependence on the weights $\omega$ in Assumption \ref{ass1}. Under these assumptions, the same boundedness and finite-range dependence holds true for the sequence $\eta$, and the result thus follows precisely from Theorem 9.1.7 in \cite{Fer}. We remark that the latter is a fairly deep result and requires the analysis of certain very precise and exact cancellations in the cumulants, using Tutte polynomials and combinatorial structures that goes beyond the brutal absolute value bounds that can be obtained just using the definition of a cumulant.
\end{proof}

The purpose of the above lemma will be to control the remainder (terms of order $\ge 5$) when Taylor expanding the cumulant generating function. While the bound in \cite{Fer} is the strongest one that we could find in the literature, there is some previous work devoted to bounding cumulants, see e.g. \cite{Jan, Rao,Dou1,Dou2}. Although we will not use the following fact, we also remark that the bound in the previous lemma is equivalent to the zero-freeness condition needed to apply Bryc's CLT to the sequence $X_t:=t^{-1} (\eta_1+...+\eta_t)$, see \cite[Eq. (1.2)]{Bry3} as well as some related work of \cite{Bry1,Bry2}.

The above observations now lead to the following theorem. 

\begin{thm}\label{ttc}
    Let $c_N$ be as in the introduction, and $c_j^*$ as in Lemma \ref{3.1}. We have that $|c_N - c_2^* N^{1/2} -c_3^* N^{1/4} - c_4^*| \leq CN^{-1/4}.$ More generally, let $f_{\mathrm{cgf}}$ be as in Definition \ref{fn}. For any fixed time horizon $T>0$, we have the bound $$\sup_{t\in (N^{-1}\mathbb Z)\cap [0,T] } |f_{\mathrm{cgf}}(Nt,N^{-1/4}) - (c_2^* N^{1/2} +c_3^* N^{1/4} + c_4^*)t|\leq C N^{-1/4}.$$
\end{thm}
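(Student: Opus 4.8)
The plan is to Taylor-expand the cumulant generating function $f_{\mathrm{cgf}}(Ns, \lambda)$ in the variable $\lambda$ around $\lambda = 0$, truncate after the fourth-order term, and then substitute $\lambda = N^{-1/4}$ and $s = t$ so that the powers of $\lambda$ combine with the known asymptotics of the cumulants $\kappa_n(\eta_1 + \cdots + \eta_{Nt})$ to produce exactly the claimed linear-in-$t$ main term plus an $O(N^{-1/4})$ remainder. Concretely, writing $f_{\mathrm{cgf}}(Nt, \lambda) = \sum_{n=1}^\infty \frac{1}{n!} \kappa_n(\eta_1 + \cdots + \eta_{Nt}) \lambda^n$, I first note $\kappa_1 = 0$, and then apply Lemma \ref{3.1} (in its quantitative form, with $O(1/N)$ rate) to write, for $n = 2, 3, 4$,
\begin{equation*}
\tfrac{1}{n!}\kappa_n(\eta_1 + \cdots + \eta_{Nt}) = c_n^* \cdot Nt + O(1),
\end{equation*}
where the $O(1)$ is uniform over $t \in [0,T]$ because $Nt \le NT$ means the absolute error $|\kappa_n - n! c_n^* Nt|$ is bounded by a constant times $1$ (the $O(1/N)$ relative rate times $Nt$, with the implied constant depending only on $C$ and $T$). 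Multiplying by $\lambda^n = N^{-n/4}$ gives contributions $c_2^* N^{1/2} t + O(N^{-1/2})$, $c_3^* N^{1/4} t + O(N^{-3/4})$, and $c_4^* t + O(N^{-1})$, respectively, which together account for the displayed main term $(c_2^* N^{1/2} + c_3^* N^{1/4} + c_4^*) t$ up to an error of order $N^{-1/2}$.

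The remaining task is to bound the tail $\sum_{n \ge 5} \frac{1}{n!} |\kappa_n(\eta_1 + \cdots + \eta_{Nt})| \cdot N^{-n/4}$ uniformly in $t \in (N^{-1}\mathbb Z) \cap [0,T]$. This is precisely where Lemma \ref{appx} enters: it gives $|\kappa_n(\eta_1 + \cdots + \eta_s)| \le C^n n! \, s$, so with $s = Nt \le NT$ the tail is bounded by
\begin{equation*}
\sum_{n \ge 5} \frac{1}{n!} \cdot C^n n! \, (NT) \cdot N^{-n/4} = NT \sum_{n \ge 5} (C N^{-1/4})^n = NT \cdot \frac{(C N^{-1/4})^5}{1 - C N^{-1/4}},
\end{equation*}
valid once $N$ is large enough that $C N^{-1/4} < 1$. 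The leading factor is $NT \cdot C^5 N^{-5/4} (1 + o(1)) = O(N^{-1/4})$, which is exactly the claimed error order. Combining this tail estimate with the expansion of the first four terms, and absorbing all the smaller $O(N^{-1/2})$, $O(N^{-3/4})$, $O(N^{-1})$ errors into a single $O(N^{-1/4})$, yields the uniform bound in the theorem. The first statement, $|c_N - c_2^* N^{1/2} - c_3^* N^{1/4} - c_4^*| \le C N^{-1/4}$, is then the special case $t = 1$, since $c_N = f_{\mathrm{cgf}}(N, N^{-1/4})$ by definition \eqref{cn}.

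I do not expect any serious obstacle here, since both the key inputs (Lemma \ref{3.1} for the low-order cumulants and Lemma \ref{appx} for the tail) are already in hand; the proof is essentially bookkeeping of error terms. The one point requiring a little care is the uniformity in $t$: one must make sure the constant $C$ in the final bound does not blow up as $t$ ranges over $[0,T]$, but this is automatic because every error term carries a factor $Nt \le NT$ and the geometric tail converges with a rate independent of $t$. A second minor point is justifying that the power series for $f_{\mathrm{cgf}}(Nt, \lambda)$ actually converges at $\lambda = N^{-1/4}$ for $N$ large — but this too follows from Lemma \ref{appx}, which shows the series has radius of convergence at least $1/C > N^{-1/4}$ for large $N$, so all the manipulations are legitimate.
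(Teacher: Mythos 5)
Your proposal is correct and follows essentially the same route as the paper's own proof: Taylor-expand the cumulant generating function, use Lemma \ref{3.1} to replace the $n=2,3,4$ cumulants by $c_n^* Nt$ up to $O(1)$ errors, and control the tail $n\ge 5$ via the bound $|\kappa_n(\eta_1+\cdots+\eta_s)|\le C^n n!\, s$ from Lemma \ref{appx}, which also justifies convergence of the series at $\lambda=N^{-1/4}$. Nothing further is needed.
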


\begin{proof}
    Writing $c_N:= f_{\mathrm{cgf}}(N,N^{-1/4}) $ shows why the second bound is more general (make $t=1$). To prove the second bound, use 
    $$\sup_{t\in (N^{-1}\mathbb Z)\cap [0,T] } \sup_{|\lambda|\leq \lambda_0 }|f_{\mathrm{cgf}}(Nt,\lambda ) - \sum_{k=1}^4 \frac1{k!} \kappa_k(\eta_1+...+\eta_{Nt}) \lambda^k |\leq C \sum_{n=5}^\infty \frac{\lambda_0^n }{n!} |\kappa_n(\eta_1+...+\eta_{Nt})|.$$ For $j=1,2,3,4$, Lemma \ref{3.1} shows that $|\kappa_j(\eta_1+...+\eta_{Nt}) - c_j^*Nt| \leq C'$ for some absolute constant $C'$. Also, from Proposition \ref{appx}, one has that $|\kappa_n(\eta_1+...+\eta_{s})|\leq (C^nn! )s$ with $C$ independent of $n,s$. Thus we get $$\sup_{t\in (N^{-1}\mathbb Z)\cap [0,T] } \sup_{|\lambda|\leq \lambda_0 }|f_\mathrm{cgf}(Nt,\lambda ) - \sum_{k=1}^4 \frac1{k!} Nc_k^*t  \lambda^k | - C'\sum_{k=1}^4|\lambda|^k\leq C \lambda_0^5\cdot N.$$
    Making $\lambda=\lambda_0=N^{-1/4}$ clearly yields the claim.
\end{proof}

The above can be viewed as precise first-moment asymptotics for the field $Z^N$ from \eqref{zn}. For example by \eqref{cn} we easily obtain uniformly over $t\in (N^{-1}\mathbb Z)\cap [0,T]$ that
$$|e^{-c_Nt} \sum_{x\in \mathbb Z} \mathbb E[Z^N_{0,t}(x,0)] - 1| \leq CN^{-1/4},$$ just by taking exponential in the previous theorem. The next section will be devoted to deriving higher moment formulas, with more general parameters allowed. This higher moment analysis will be the key to proving Theorem \ref{mr}.

\section{Calculating higher moments}

Fix $k\in\mathbb N$. For $s\in \mathbb Z_{\ge 0}$ and $\vec y\in \mathbb Z^k$ and $\phi:\mathbb R\to\mathbb R$ (bounded, measurable) let us define $$z_\lambda (k,\phi; s,\vec y):= \mathbb E \bigg[ e^{\lambda \sum_{j=1}^k  \sum_{r=0}^{s} \omega_{s-r, y_j+R^j(r)}}\prod_{j=1}^n \phi(R^j(s)+y_j)\bigg]. $$
Here the expectation is taken \textbf{both} over the weights $\omega=\{\omega_{t,x}\}_{(t,x)\in \Bbb Z^2}$ and the $k$ independent random walk paths $(R^1,...,R^k)$ started at the origin, which (as always) are assumed to be independent. The relevance of $z_\lambda$ is as follows.

\begin{prop}\label{0} Let $k\in \mathbb N,$ let $y_1,...,y_k\in N^{-1/2}\mathbb Z$, and let $\phi$ be a continuous and bounded function on $\mathbb R$. With $\mathcal Z_N$ as defined in Theorem \ref{mr}, we have that $$\sum_{x_1,...,x_k\in N^{-1/2}\mathbb Z}\mathbb E[ \mathcal Z^N_{0,t}(x_1,y_1)\cdots \mathcal Z^N_{0,t}(x_k,y_k)] \prod_{j=1}^k \phi(x_j) = e^{-c_Nt} \cdot z_{N^{-1/4}} ( k,\phi(N^{-1/2}\bullet) ; Nt, N^{1/2} \vec y).$$
\end{prop}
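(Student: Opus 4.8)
The statement is essentially a bookkeeping identity, so the plan is to unwind both sides to the same expression over $k$ independent random walk paths and the environment. First I would write out the left-hand side using the definition $\mathcal Z^N_{0,t}(x,y) = N^{1/2} e^{-c_N t} Z^N_{0,Nt}(N^{1/2}x, N^{1/2}y)$, so that the product of $k$ copies produces a factor $N^{k/2} e^{-k c_N t}$ times a product of $k$ partition functions $Z^N_{0,Nt}(N^{1/2}x_j, N^{1/2}y_j)$. Since the $x_j$ range over $N^{-1/2}\mathbb Z$, I would substitute $x_j = N^{-1/2} a_j$ with $a_j \in \mathbb Z$ to turn the sum over $x_j$ into a sum over $a_j \in \mathbb Z$; each term then carries $\phi(N^{-1/2} a_j)$, i.e. the test function $\phi(N^{-1/2}\bullet)$ evaluated at integer points. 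This is where the $\phi(N^{-1/2}\bullet)$ appearing on the right-hand side comes from.

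Next I would expand each $Z^N_{0,Nt}(\cdot,\cdot)$ using its definition \eqref{zn}. Each $Z^N_{0,Nt}(N^{1/2}a_j, N^{1/2}y_j)$ is a conditional expectation over one random walk path, say $R^j$, equal to $\mathbf E[\exp(N^{-1/4}\sum_{r=0}^{Nt}\omega_{Nt-r,\,N^{1/2}y_j + R^j(r)})\ind_{\{R^j(Nt) = N^{1/2}(a_j - y_j)\}} \mid \omega]$. The product over $j$ of these single-path conditional expectations equals the conditional expectation over the $k$-tuple of independent walks of the product of the integrands — here I would note that independence of the $R^j$ given $\omega$ (they are mutually independent and independent of $\omega$) is exactly what lets the product of conditional expectations collapse into one joint conditional expectation. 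Then I would take $\mathbb E[\,\cdot\,]$ over $\omega$ as well, so the whole left side (before the sum over $a_j$) becomes $N^{k/2} e^{-k c_N t}\,\mathbb E\big[\exp(N^{-1/4}\sum_j\sum_r \omega_{Nt-r, N^{1/2}y_j + R^j(r)})\prod_j \ind_{\{R^j(Nt)=N^{1/2}(a_j-y_j)\}}\big]$, with the outer expectation now over both $\omega$ and $(R^1,\dots,R^k)$.

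The final step is to carry out the sum over $a_1,\dots,a_k \in \mathbb Z$. Inside the expectation, for each realization of the walks the indicator $\ind_{\{R^j(Nt) = N^{1/2}(a_j - y_j)\}}$ picks out exactly one value of $a_j$, namely $a_j = y_j + N^{-1/2} R^j(Nt)$, and one checks this is an integer on the event (since $N^{1/2}y_j \in \mathbb Z$ and $R^j(Nt)\in\mathbb Z$, so $a_j = N^{-1/2}(N^{1/2}y_j + R^j(Nt))$ — wait, that needs $N^{1/2}y_j + R^j(Nt)$ divisible by $N^{1/2}$; here one uses that $y_j \in N^{-1/2}\mathbb Z$ gives $N^{1/2}y_j\in\mathbb Z$ and then the indicator forces $R^j(Nt) = N^{1/2}(a_j - y_j)$, so summing over integer $a_j$ is summing over those $R^j(Nt)$ values that are multiples of $N^{1/2}$, i.e. the indicator and the sum are consistent). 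Summing over $a_j$ therefore removes the indicator and replaces $\phi(N^{-1/2}a_j)$ by $\phi(N^{-1/2}(N^{1/2}y_j + R^j(Nt))) = \phi(y_j + N^{-1/2}R^j(Nt))$, which is precisely $\phi(N^{-1/2}\bullet)$ evaluated at $N^{1/2}y_j + R^j(Nt)$ — matching the integrand $\prod_j \phi(N^{-1/2}(R^j(s) + N^{1/2}y_j))$ in the definition of $z_\lambda$ with $s = Nt$, $\vec y \to N^{1/2}\vec y$, $\lambda = N^{-1/4}$. Collecting the prefactors, $N^{k/2}e^{-kc_N t}$ meets one factor $N^{k/2}$ absorbed... actually the $N^{k/2}$ from the $k$ copies of $\mathcal Z^N$ and the $N^{-k/2}$ that would come from rescaling the sum to an integral do not appear here because the stated identity keeps a bare sum, not a Riemann sum; so the $N^{k/2}$ is genuinely there and the right side as written must be read with $\mathcal Z^N$ carrying it — I would double-check the normalization bookkeeping against the definition of $\mathcal Z^N$ and against $z_\lambda$, since $z_\lambda$ has no $N$-prefactor. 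The one subtlety to flag: the only slightly delicate point is the parity/lattice consistency between the sum over $x_j \in N^{-1/2}\mathbb Z$ and the support of $R^j(Nt)$, but under the aperiodicity assumption and the rescaling conventions this causes no issue, and otherwise the proof is a direct term-by-term unwinding of definitions with no real obstacle. I would present it compactly, doing the $k=1$ case in detail and remarking that the general $k$ follows identically by the independence of the walks.
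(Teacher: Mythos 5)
Your proof is correct and is exactly the argument the paper intends (the paper's proof is the single sentence ``clear just by definitions''): unwind $\mathcal Z^N$ into $Z^N$, collapse the product of $k$ single-walk conditional expectations into one joint expectation over $k$ independent walks, and sum out the endpoint indicators so that $\phi(x_j)$ becomes $\phi(y_j+N^{-1/2}R^j(Nt))$. The normalization discrepancy you flagged is real: the prefactor on the right-hand side of the stated identity should be $N^{k/2}e^{-kc_Nt}$ rather than $e^{-c_Nt}$, as your computation shows and as is confirmed by consistency with Proposition \ref{a}, whose left side carries the compensating factor $N^{-k/2}$ and whose right side absorbs the remaining $e^{-kc_Nt}$ into the $sc_j^*$ subtractions in $\mathbf{Diag}$ and $\mathbf{Err}^{1,2}$.
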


The proof is clear just by definitions. 
We will now use the tower property of conditional expectation to write $$z_{\lambda}(k,\phi ;s,y) = \mathbf E_{\mathrm{RW}^{\otimes k}} \bigg[ \mathbb E \big[ e^{\lambda \sum_{j=1}^k  \sum_{r=0}^{s} \omega_{s-r, y_j+R^j(r)}}|R^1,...,R^k\big] \cdot \prod_{j=1}^k \phi(R^j(s)+y_j)\bigg],$$
where the inner expectation is only over the weights $\omega$ (conditioned on the paths $R^j$), and the outer expectation is over $k$ independent random walk paths. Had we assumed that the weights were independent \textit{in time}, calculating the inner expectation would be pretty simple. However, it becomes much more interesting with our assumption of time correlations. Instead, rewrite it as 
\begin{equation}\label{IMP}z_{\lambda}(k,\phi ;s,y) = \mathbf E_{\mathrm{RW}^{\otimes k}} \bigg[ e^{\log\mathbb E \big[ e^{\lambda \sum_{j=1}^k  \sum_{r=0}^{s} \omega_{s-r, y_j+R^j(r)}}|R^1,...,R^k\big]} \cdot \prod_{j=1}^k \phi(R^j(s)+y_j)\bigg].
\end{equation}
Now expand the cumulant generating function in the variable $\lambda$: 
\begin{equation}\label{ex}\log\mathbb E \big[ e^{\lambda \sum_{j=1}^k  \sum_{r=0}^{s} \omega_{s-r, y_j+R^j(r)}}|R^1,...,R^k\big] = \sum_{n=1}^\infty \frac1{n!} \kappa_n \bigg( \sum_{j=1}^k  \sum_{r=0}^{s} \omega_{s-r, y_j+R^j(r)} \bigg| R^1,...,R^k \bigg)\lambda^n.
\end{equation}
Given deterministic paths $\gamma^1,...,\gamma^k: \mathbb Z\to\mathbb Z$ we need to be able to compute or estimate the quantities 
\begin{equation}\label{imp2}\frac1{n!}\kappa_n\bigg( \sum_{j=1}^k  \sum_{r=0}^{s} \omega_{s-r, y_j+\gamma^j(r)}\bigg) = \frac1{n!}\sum_{j_1,...,j_n=1}^k \sum_{r_1,...,r_n=0}^s \kappa_{(n)} \big( \omega_{s-r_1 , y_{j_1}+\gamma^{j_1}(r_1)},...,\omega_{s-r_n , y_{j_n}+\gamma^{j_n}(r_n)}\big).
\end{equation}
Let us consider separate cases of $n$ in order to analyze \eqref{imp2}.
\\
\\
\textbf{The case $n=1$: } The $n=1$ sum in \eqref{imp2} vanishes by the mean-zero assumption on the environment $\omega$: $$\sum_{j=1}^k \sum_{s=0}^r \mathbb E[ \omega_{s-r, y_j + \gamma^j(r)}]=0.$$
\textbf{The case $n=2$: } This will be the most important case. Write $\mathbf f(t,x):= \mathbb E[\omega_{0,0}\omega_{t,x}]$. Note that $\kappa_{(2)} (A,B) = \mathrm{Cov}(A,B)$, thus for $n=2$ we can write \eqref{imp2} as 
\begin{align*}\sum_{j_1,j_2=1}^k \sum_{r_1,r_2=1}^s Cov \big( \omega_{s-r_1 , y_{j_1}+\gamma^{j_1}(r_1)},&\omega_{s-r_2 , y_{j_2}+\gamma^{j_2}(r_2)}\big) \\&=\sum_{j_1,j_2=1}^k \sum_{r_1,r_2=1}^s \mathbf f\big( r_1-r_2, y_{j_1}-y_{j_2} + \gamma^{j_1} (r_1) - \gamma^{j_2}(r_2)\big).\end{align*}
Writing $\boldsymbol\gamma:= (\gamma^1,...,\gamma^k)$, we are going to split the above sum into two parts. The first part $\mathbf{Diag}_s(\boldsymbol\gamma)$ will consist of appropriately normalized ``diagonal terms" with $j_1=j_2$, and the second part $\mathbf{Off}_s(\boldsymbol\gamma, \vec y)$ will consist of ``off-diagonal" terms. More precisely, we define the quantities 
$$ \mathbf{Diag}_s(\boldsymbol\gamma):= \frac12 \sum_{j=1}^k \sum_{r_1,r_2=1}^s \mathbf f(r_1-r_2, \gamma^j(r_1) - \gamma^j(r_2)) \;\;\;\;- \;\;\;\; sc_2^*,$$ where $c_2^*$ is as in Lemma \ref{3.1}, and
$$\mathbf{Off}_s(\boldsymbol\gamma, \vec y):= \frac12 \sum_{j_1\ne j_2} \sum_{r_1,r_2=1}^s \mathbf f\big( r_1-r_2, y_{j_1}-y_{j_2} + \gamma^{j_1} (r_1) - \gamma^{j_2}(r_2)\big). $$
Here $s\in \mathbb Z_{\ge 0}$ and $\boldsymbol\gamma$ can be any $k$-tuple of paths $\mathbb Z\to \mathbb Z$. Since $n=2$, take note of the factor of $\lambda^2$ in the Taylor expansion \eqref{ex}, with $\lambda:=N^{-1/4}$. 

Before moving onwards, let us remark on the critical importance of the above two path functionals $\mathbf{Off}$ and $\mathbf{Diag}$. As one might be able to guess, under the scaling of Theorem \ref{mr}, in the exponential of \eqref{IMP}, this yields terms of the form $N^{-1/2} \mathbf{Off}_{Nt}(\mathbf R, N^{1/2} \vec y) + N^{-1/2} \mathbf{Diag}_{Nt} (\mathbf R)$, where $\mathbf R=(R^1,...,R^k)$ is distributed as $k$ independent random walks. Each of these will contribute differently to the limit. 

Specifically, the diagonal part $N^{-1/2} \mathbf{Diag}_{Nt} (\mathbf R)$ will converge to some Brownian motion that will yield the \textbf{Girsanov tilt} which will be responsible for the shearing constant $v$ in Theorem \ref{mr}. This Brownian motion will be correlated to the one given by the limit of $N^{-1/2} \mathbf R(Nt)$, but not strictly equal to it. See Step 3 in the proof of Theorem \ref{b} just below.

Meanwhile, the off-diagonal part $N^{-1/2} \mathbf{Off}_{Nt}(\mathbf R, N^{1/2} \vec y)$ will converge to \textbf{Brownian local times}, where now the Brownian motion is simply the one given by the limit of $N^{-1/2} \mathbf R(Nt)$. See Step 2 in the proof of Theorem \ref{b} just below.

Each of these contributions $\mathbf{Off}_s$ and $\mathbf{Diag}_s$ will be very important for the scaling limit. 
\\
\\
\textbf{The cases $n=3$ and $n=4$: } Since $n=3$ or $n=4$, take note of the factor of $\lambda^3$ or $\lambda^4$ in the Taylor expansion, with $\lambda:=N^{-1/4}$. This case will therefore not be important in the limit, since $N^{-3/4}$ or $N^{-1}$ is much less than the threshold of $N^{-1/2}$ that is needed to be meaningful in the limit (as noted in the case $n=2$). We will need another separation argument to show this. Specifically, we will separate \eqref{imp2} into diagonal terms and off-diagonal terms again.

Define the following constant, whose significance will become clear in the next section.
\begin{equation}\label{gamma}\gamma^2 := \tfrac14 \sum_{k_1,k_2,j_1,j_2\in\mathbb Z} \mathrm{Cov} \big( \mathbf f(k_1, R(j_1+k_1), \mathbf f(k_2, R(j_2+k_2))\big).
\end{equation}
For $\lambda>0$, $\boldsymbol\gamma: \mathbb Z\to\mathbb Z^k$, and $\vec y \in \mathbb Z^k$, define the ``diagonal parts" by $$\mathbf{Err}^1_s(\lambda;\boldsymbol\gamma;\vec y):=\lambda^3 \sum_{j=1}^k \bigg[\frac1{3!}\sum_{r_1,...,r_3=0}^s \kappa_{(3)} \big( \omega_{s-r_1 , y_{j}+\gamma^{j}(r_1)},...,\omega_{s-r_3 , y_{j}+\gamma^{j}(r_3)}\big) \;\;\;\;-\;\;\;\; sc_3^*\bigg]$$
$$\mathbf{Err}^2_s(\lambda;\boldsymbol\gamma;\vec y):=\lambda^4\sum_{j=1}^k \bigg[\frac1{4!}\sum_{r_1,...,r_4=0}^s \kappa_{(4)} \big( \omega_{s-r_1 , y_{j}+\gamma^{j}(r_1)},...,\omega_{s-r_4 , y_{j}+\gamma^{j}(r_4)}\big) \;\;\;\; - \;\;\;\; s \big( c_4^* - \tfrac12 \gamma^2)\bigg].$$
Here $c_j^*$ are the constants from Lemma \ref{3.1}. Finally define the ``off-diagonal parts" by $$\mathbf{Err}^3_s(\lambda;\boldsymbol\gamma;\vec y):=\frac{\lambda^3}{3!}\sum_{\substack{j_1,...,j_3\\ j_i\;\mathrm{not\; all \;equal}}}^k \sum_{r_1,...,r_3=0}^s \kappa_{(3)} \big( \omega_{s-r_1 , y_{j_1}+\gamma^{j_1}(r_1)},...,\omega_{s-r_3 , y_{j_3}+\gamma^{j_3}(r_3)}\big),$$
$$\mathbf{Err}^4_s(\lambda;\boldsymbol\gamma;\vec y):=\frac{\lambda^4}{4!}\sum_{\substack{j_1,...,j_4\\ j_i\;\mathrm{not\; all \;equal}}}^k \sum_{r_1,...,r_4=0}^s \kappa_{(4)} \big( \omega_{s-r_1 , y_{j_1}+\gamma^{j_1}(r_1)},...,\omega_{s-r_4 , y_{j_4}+\gamma^{j_4}(r_4)}\big),$$where in all expressions $s\in \mathbb Z_{\ge 0}.$ We labeled all of these terms as ``Err" because they represent error terms that will be shown to be irrelevant in the scaling limit.
\\
\\
\textbf{The case $n\ge 5$: } It turns out that we can truncate the infinite sum in \eqref{ex} at this point, without needing to separate into diagonal and off-diagonal cases. The reason is very similar to the proof of Theorem \ref{ttc}, and will be elaborated later (see Step 6 in the proof of Theorem \ref{b} below)
. We thus define
$$\mathbf{Err}^5_s(\lambda;\boldsymbol\gamma;\vec y):=\sum_{n=5}^\infty \frac{\lambda^n}{n!} \sum_{j_1,...,j_n=1}^k \sum_{r_1,...,r_n=0}^s \kappa_{(n)} \big( \omega_{s-r_1 , y_{j_1}+\gamma^{j_1}(r_1)},...,\omega_{s-r_n , y_{j_n}+\gamma^{j_n}(r_n)}\big).$$


Summarizing this whole discussion and applying Proposition \ref{0}, we have proved the following representation for the joint moments of the field $\mathcal Z^N$ appearing in Theorem \ref{mr}: 

\begin{prop}[Moment formula]\label{a}
    Let $k\in \mathbb N,$ let $y_1,...,y_k\in N^{-1/2}\mathbb Z$, and let $\phi$ be a smooth bounded function on $\mathbb R$. For $t\in N^{-1}\mathbb Z_{\ge 0}$, we have that 
    \begin{align*}&N^{-k/2} \sum_{x_1,...,x_k\in N^{-1/2}\mathbb Z}\mathbb E[ \mathcal Z^N_{0,t}(x_1,y_1)\cdots \mathcal Z^N_{0,t}(x_k,y_k)] \prod_{j=1}^k \phi(x_j)\\ &= \mathbf E_{\mathrm{RW}^{\otimes k}}\bigg[ e^{N^{-1/2} \mathbf{Off}_{Nt}(\mathbf R, N^{1/2} \vec y) + N^{-1/2} \mathbf{Diag}_{Nt} (\mathbf R)-\frac{\gamma^2kt}2 + \sum_{a=0}^5 \mathbf{Err}^a_{Nt} (N^{-1/4};\mathbf R, N^{1/2}\vec y) } \cdot \prod_{j=1}^k \phi(y_j+N^{-1/2}R^j(Nt)) \bigg],
    \end{align*}
    where the expectation is over $k$ independent random walk paths $(R^1,...,R^k)=\mathbf R,$ started from the origin. Here $\mathbf{Off}_s$, $\mathbf{Diag}_s$, and $\mathbf{Err}_s$ were all defined in the discussion following \eqref{imp2} above. 
\end{prop}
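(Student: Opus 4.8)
The statement in question (Proposition \ref{a}) is essentially a bookkeeping identity, so the plan is largely to unwind the definitions carefully and verify that all the pieces fit together.

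\textbf{Overall approach.} The plan is to start from Proposition \ref{0}, which already gives
$$\sum_{x_1,\dots,x_k}\mathbb E[\mathcal Z^N_{0,t}(x_1,y_1)\cdots \mathcal Z^N_{0,t}(x_k,y_k)]\prod_j\phi(x_j) = e^{-c_Nt}\, z_{N^{-1/4}}(k,\phi(N^{-1/2}\bullet);Nt,N^{1/2}\vec y),$$
and then expand the right-hand side using the tower-property rewriting \eqref{IMP} and the cumulant expansion \eqref{ex}. So the core of the proof is: insert the conditional cumulant generating function into the exponent, sort the resulting series $\sum_{n\ge1}\frac{\lambda^n}{n!}\kappa_n(\cdots\mid \mathbf R)$ into the blocks $n=1$ (which vanishes), $n=2$ (split into $\mathbf{Diag}$ and $\mathbf{Off}$), $n=3,4$ (split into diagonal $\mathbf{Err}^1,\mathbf{Err}^2$ and off-diagonal $\mathbf{Err}^3,\mathbf{Err}^4$), and $n\ge5$ ($\mathbf{Err}^5$), exactly as catalogued in the discussion following \eqref{imp2}. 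Then I would account for the deterministic constants that were subtracted off inside the $\mathbf{Diag}$ and $\mathbf{Err}$ definitions, and check they reassemble to reproduce $e^{-c_Nt}$ together with the stray $e^{-\gamma^2 kt/2}$ factor.

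\textbf{Key steps in order.} First, apply the conditional expectation identity \eqref{IMP} with $\lambda=N^{-1/4}$, $s=Nt$, and shifts $y_j\mapsto N^{1/2}y_j$; the integrand becomes $\exp(\text{conditional cgf})\cdot\prod_j\phi(N^{-1/2}(R^j(Nt)+N^{1/2}y_j)) = \exp(\cdots)\cdot\prod_j\phi(y_j+N^{-1/2}R^j(Nt))$, which already matches the product term in the claim. Second, substitute the $\lambda$-expansion \eqref{ex} into the exponent and regroup term-by-term: the $n=1$ term is zero by mean-zero of $\omega$; the $n=2$ term equals $N^{-1/2}\mathbf{Diag}_{Nt}(\mathbf R)+N^{-1/2}\mathbf{Off}_{Nt}(\mathbf R,N^{1/2}\vec y)+N^{-1/2}\cdot(Nt)c_2^*$ after adding back the $sc_2^*$ that was subtracted in the definition of $\mathbf{Diag}$; the $n=3$ term equals $\mathbf{Err}^1_{Nt}+\mathbf{Err}^3_{Nt}+N^{-3/4}(Nt)c_3^*$; the $n=4$ term equals $\mathbf{Err}^2_{Nt}+\mathbf{Err}^4_{Nt}+N^{-1}(Nt)(c_4^*-\tfrac12\gamma^2)$; and the $n\ge5$ block is exactly $\mathbf{Err}^5_{Nt}$. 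Third, collect the leftover deterministic constants: $N^{-1/2}(Nt)c_2^* + N^{-3/4}(Nt)c_3^* + N^{-1}(Nt)c_4^* = t(c_2^*N^{1/2}+c_3^*N^{1/4}+c_4^*)$, which I would identify with $c_Nt$ up to no error here — actually one must be careful: $c_N = c_2^*N^{1/2}+c_3^*N^{1/4}+c_4^*+O(N^{-1/4})$ only approximately by Theorem \ref{ttc}. So the cleanest route is \emph{not} to invoke Theorem \ref{ttc} at all in this proposition, but rather to note that the exact deterministic shift generated by the full series $\sum_{n\ge1}\frac{\lambda^n}{n!}\kappa_n(\eta_1+\cdots+\eta_{Nt})$ applied to a single path is exactly $f_{\mathrm{cgf}}(Nt,N^{-1/4})=\log\mathbb E[e^{N^{-1/4}\sum_{s=0}^{Nt}\eta_s}]$. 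Wait — but the $\mathbf{Err}$ and $\mathbf{Diag}$ definitions use the \emph{path-dependent} cumulants, not the averaged ones. The correct accounting is: what gets subtracted off inside $\mathbf{Diag},\mathbf{Err}^1,\mathbf{Err}^2$ is precisely the deterministic quantity $(Nt)(c_2^*N^{-1/2}+c_3^*N^{-3/4}+c_4^*N^{-1}) - \tfrac12\gamma^2 kt\cdot N^{-1}\cdot N =$ … so regrouping, the exponent in \eqref{IMP} equals $[\text{the five }\mathbf{Err}\text{'s}]+N^{-1/2}\mathbf{Diag}+N^{-1/2}\mathbf{Off} + t(c_2^*N^{1/2}+c_3^*N^{1/4}+c_4^*) - \tfrac{\gamma^2 kt}{2}$. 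Fourth, multiply by the prefactor $e^{-c_Nt}$ from Proposition \ref{0} and by $N^{-k/2}$; the $N^{-k/2}$ is exactly what is needed so that $N^{1/2}$ appearing $k$ times in the definition of $\mathcal Z^N$ (each $\mathcal Z^N$ carries an $N^{1/2}$) is normalized away on the left-hand side. The factor $e^{-c_Nt}\cdot e^{t(c_2^*N^{1/2}+c_3^*N^{1/4}+c_4^*)}$ is then exactly $1$ \emph{if} $c_N$ is defined to be $c_2^*N^{1/2}+c_3^*N^{1/4}+c_4^*$; since the paper's $c_N$ is instead defined by \eqref{cn} as $\log\mathbb E[e^{N^{-1/4}\sum\eta_s}]$, the honest statement is that this residual factor is $\exp(O(N^{-1/4}))$ by Theorem \ref{ttc}, which can be absorbed — so I would either (a) state the proposition with this harmless $1+O(N^{-1/4})$ factor made explicit, or (b) observe that the $\mathbf{Err}$ terms are defined with the \emph{path-dependent} cumulants and so their \emph{expectation-over-$\mathbf{R}$-of-the-single-path-contribution} exactly reconstitutes $c_N$, making the identity exact. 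I would go with whichever matches the downstream usage in Section 5; most likely (b), exploiting that for a single random walk path the cumulants $\kappa_n(\sum_r\omega_{s-r,R(r)}\mid R)$ are, in distribution over $R$, the cumulants of $\eta$.

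\textbf{Main obstacle.} There is no analytic difficulty here — convergence of the $n\ge5$ series is guaranteed by Lemma \ref{appx} (finite-range dependence plus boundedness), and everything else is algebra. The real ``hard part'' is purely organizational: getting the constants to balance exactly. Specifically, one must track (i) the $sc_2^*$, $sc_3^*$, and $s(c_4^*-\tfrac12\gamma^2)$ recentering terms pulled out of $\mathbf{Diag}$, $\mathbf{Err}^1$, $\mathbf{Err}^2$; (ii) the extra $-\tfrac12\gamma^2$ inside $\mathbf{Err}^2$, which is why the claimed formula has a lone $e^{-\gamma^2 kt/2}$ sitting outside all the error terms (the $k$ arises because each of the $k$ diagonal blocks $j=1,\dots,k$ contributes one copy); and (iii) the interplay between the definition \eqref{cn} of $c_N$ and the $c_j^*$ expansion. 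I would therefore devote most of the written proof to a careful side-by-side display of ``exponent before regrouping'' versus ``exponent after regrouping,'' verifying the two agree identically, and then simply cite Proposition \ref{0} for the prefactor and the definition of $\mathcal Z^N$ for the $N^{-k/2}$ normalization.
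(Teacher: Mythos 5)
Your proposal follows the paper's own route exactly: Proposition \ref{a} is established in the paper simply by combining Proposition \ref{0} with the rewriting \eqref{IMP}, the cumulant expansion \eqref{ex}, and the case-by-case regrouping of \eqref{imp2} into $\mathbf{Off}$, $\mathbf{Diag}$, and the five $\mathbf{Err}$ terms --- precisely your plan, including the observation that the $-\tfrac12\gamma^2$ recentering inside $\mathbf{Err}^2$ is what produces the lone $e^{-\gamma^2kt/2}$, one copy per diagonal block. One clarification on the fork you leave unresolved at the end: option (b) does not work, because $c_N=\log\mathbf E_{\mathrm{RW}}[\exp(\text{conditional cgf})]$ rather than $\mathbf E_{\mathrm{RW}}[\text{conditional cgf}]$, so the per-path recentering constants $t(c_2^*N^{1/2}+c_3^*N^{1/4}+c_4^*)$ cannot exactly reconstitute $c_N$. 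The correct resolution is your option (a): after multiplying by the $e^{-kc_Nt}$ prefactor, Theorem \ref{ttc} leaves a residual deterministic factor $e^{O(ktN^{-1/4})}=1+O(N^{-1/4})$, which is harmless since the proposition is only ever used through the $N\to\infty$ limit in Proposition \ref{limmom}. Your reading is, if anything, more careful than the paper's on this point.
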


\section{Limit theorem and exponential moment bounds for all of the processes}

To take the $N\to \infty$ limit of the expectation on the right side in Proposition \ref{a}, it is clear that we will need a \textit{joint} limit theorem for all of the different processes appearing in the expression. We now pursue this, first proving two lemmas before obtaining the full process-level convergence result in Theorem \ref{b}.

\begin{lem}[Limit theorem for additive functionals of random walks on $\mathbb Z$] \label{addfl} Consider two independent random walks $(R^j(t))_{t\in \mathbb Z_{\ge 0}, j=1,2}$ on $\mathbb Z$ of mean zero and increments of variance $1$. Assume they are aperiodic. Let $f:\mathbb Z\to \mathbb R_{\ge 0}$ be of finite support. Consider a deterministic sequence $a_N\in \mathbb Z$ such that $N^{-1/2} a_N\to a\in \mathbb R$. Then for any fixed $k\in \mathbb Z$ and $T>0$, the triple of processes given by $$N^{-1/2} \big( R^1_{Nt}, R^2_{Nt}, \sum_{r=0}^{Nt} f(a_N+R^1(r) - R^2(r-k))\big)_{t\in [0,T]} $$ converges in law as $N\to\infty$, to $(X,Y,\|f\|_{\ell^1(\mathbb Z)} \cdot L_0^{X-Y+a}). $ The topology is that of $C([0,T],\Bbb R^3)$.
\end{lem}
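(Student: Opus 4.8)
The plan is to make two routine reductions, reduce to the classical joint invariance principle for a random walk and its occupation field, and reassemble. \emph{First}, absorb the time shift: put $U(r):=R^1(r)-R^2(r-k)$. For varying $r$ the increments $U(r)-U(r-1)$ use pairwise disjoint increments of $R^1$ and of $R^2$, and these two families are independent, so $\{U(r)-U(r-1)\}_{r\ge 1}$ is i.i.d., distributed as the difference of two independent increments of $R^1$: mean zero, variance $2$, bounded jumps. Aperiodicity of $R^1$ passes to $U$ (for each $d\ge 2$ two points of $\mathrm{supp}(R^1(1))$ lie in distinct residues mod $d$, hence so do two points of $\mathrm{supp}(U(1))$), while $U(0)$ is a bounded random variable, irrelevant to the diffusive scaling; if $k>0$ one uses a two-sided $R^2$ or discards the at most $|k|$ boundary terms, which contribute $O(N^{-1/2})$. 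Writing $\ell_n(y):=\#\{0\le r\le n:\ U(r)=y\}$ and using that $f$ has finite support, $N^{-1/2}\sum_{r=0}^{\lfloor Nt\rfloor}f(a_N+U(r))=\sum_{x\in\mathrm{supp}(f)}f(x)\,N^{-1/2}\ell_{\lfloor Nt\rfloor}(x-a_N)$. \emph{Second}, the multidimensional Donsker theorem gives $N^{-1/2}(R^1_{N\cdot},R^2_{N\cdot},U_{N\cdot})\Rightarrow(X,Y,X-Y)$ in $C([0,T],\mathbb R^3)$ with $X,Y$ independent standard Brownian motions, and $X-Y$ has quadratic variation $2t$, matching the increment variance of $U$.

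The core step is to show that, jointly with the Donsker convergence above and for any level sequence $\beta_N:=x-a_N$ with $x\in\mathrm{supp}(f)$ fixed (so $N^{-1/2}\beta_N\to-a=:\beta$),
$$\big(N^{-1/2}U_{N\cdot},\ N^{-1/2}\ell_{\lfloor N\cdot\rfloor}(\beta_N)\big)\ \Longrightarrow\ \big(X-Y,\ L^\beta_\cdot(X-Y)\big),$$
where $L^\beta_t$ is the occupation-density local time at level $\beta$, and that this limit is the \emph{same} for every $x\in\mathrm{supp}(f)$ because the levels differ by $O(1)=o(\sqrt N)$. Granting this, summing the identity from the first step against $f(x)$ and using $L^\beta_t(X-Y)=L^0_t(X-Y+a)$ yields the claim with constant $\sum_x f(x)=\|f\|_{\ell^1(\mathbb Z)}$. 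The walk/local-time invariance principle is classical (Borodin; see also R\'ev\'esz's monograph); in the spirit of the rest of the paper I would reprove it by the method of moments: span-one aperiodicity yields the local CLT $p_r(y)=\tfrac{1}{\sqrt{4\pi r}}e^{-y^2/4r}+o(r^{-1/2})$ uniformly in $y$, and substituting it into the explicit multi-sums for the mixed moments of the three coordinates (order the time arguments, use the Markov property) shows convergence to the corresponding mixed moments of $(X,Y,L^\beta(X-Y))$. The $O(1)$ spread of the levels $x-a_N$ washes out — transition probabilities over macroscopic times do not separate $O(1)$-nearby points — and that is precisely what produces the single factor $\|f\|_{\ell^1(\mathbb Z)}$. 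Since $L^\beta_t(X-Y)$ has sub-Gaussian tails its moment problem is determinate, so moment convergence upgrades to convergence in law; a Kolmogorov estimate from the same local-CLT second- and fourth-moment bounds (handling $t$ near $0$ separately, and using monotonicity in $t$ from $f\ge 0$) then gives tightness in $C([0,T])$.

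The main obstacle is entirely in the core step: local time is \emph{not} a continuous functional of the path in the uniform topology, so one cannot simply compose Donsker with a continuous map. In the alternative route through the discrete Tanaka--Meyer decomposition $|U(n)-\beta_N|=|U(0)-\beta_N|+M_n+\sum_{r<n}\psi(U(r)-\beta_N)$, with $\psi(z):=\mathbf E[\,|z+\zeta|\,]-|z|$ finitely supported and $\sum_z\psi(z)=\mathbf E[\zeta^2]=2$ (here $\zeta$ is an increment of $U$), the same difficulty reappears as the need to prove the martingale part $M$ converges \emph{jointly} with $U$ and then to identify the limit against Tanaka's formula for $X-Y$. In either approach the delicate points are controlling the additive functional uniformly near its support, and tracking normalization: the two standard conventions for ``local time'' differ here by the factor $\langle X-Y\rangle'=2$, so some care is needed to land on exactly $\|f\|_{\ell^1(\mathbb Z)}$. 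The two reductions and the reduction to a single level are routine.
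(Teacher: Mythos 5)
Your proposal is correct in outline, but it identifies the limit by a genuinely different route than the paper. You prove the core walk/local-time invariance principle by the method of moments: local CLT $\Rightarrow$ convergence of all mixed moments of $(R^1,R^2,\ell)$ at all time tuples $\Rightarrow$ convergence of finite-dimensional laws via determinacy (Carleman, using sub-Gaussian tails), plus Kolmogorov tightness. The paper instead establishes tightness by the same local-CLT moment bounds, but then identifies the limit with only \emph{first}-moment information: it shows the occupation measures $\pi^N(g)=N^{-1/2}\sum_{r\le Nt}\mathbb E[g(a_N+Y(r))]$ have subsequential limits that are invariant for $Y=R^1-R^2$, hence multiples of counting measure, pins the constant using the Dynkin martingale of $u_0(x)=|x|$ (i.e.\ exactly your discrete Tanaka function $g_0=(P-\mathrm{Id})|\cdot|=\psi$), and then upgrades to process identification by observing that for any limit point $(U,V)$ the process $V-L^U_a$ is a continuous martingale of bounded variation, hence zero. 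The paper's route buys economy --- no higher mixed moments, no multivariate determinacy argument, and the awkward non-continuity of local time is bypassed by the martingale characterization rather than confronted; your route buys self-containedness and is the classical Borodin-style argument, at the cost of heavier bookkeeping for multi-time joint moments (which your sketch compresses). Your reduction of the shift $k$ and of the $O(1)$ spread of levels over $\mathrm{supp}(f)$ matches the paper's, and your explicit worry about the factor $\langle X-Y\rangle'_t=2$ in the local-time normalization is well placed: note that $\sum_z\bigl(\mathbf E[|z+\zeta|]-|z|\bigr)=\mathbf E[\zeta^2]=2$ (not $1$), so under the occupation-density convention the constant works out to $\|f\|_{\ell^1}$ exactly as you track it, and this is precisely the point where the bookkeeping must be done carefully in either approach.
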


\begin{proof} 
    Marginally, it is clear from Donsker's theorem that the first two coordinates are tight in $C[0,T]$ and converge in law to a pair of independent Brownian motions $(X,Y).$ To prove the tightness of the third coordinate, first notice by the local CLT that 
    \begin{equation}\label{e1}\sup_{x\in \mathbb Z} \mathbb E [ f(x+ R^1(r))] \leq C\| f\|_{\ell^1} r^{-1/2} ,
    \end{equation}
    with $C$ independent of $r\in\mathbb Z_{\ge 0}$ and of $f\in \ell^1(\mathbb Z)$. By the Markov property of $R^1$, we can iterate this bound, and we will obtain that \begin{equation}\label{e2}\sup_{x_1,...,x_k\in \mathbb Z} \mathbb E\bigg[ \prod_{a=1}^m f(x_a + R^1(r_a)) \bigg] \leq \| f\|_{\ell^1}^m \prod_{a=1}^m 1\wedge (r_a- r_{a-1})^{-1/2} 
    \end{equation}whenever $r_1\le ... \le r_m.$ From here we can easily obtain that for any even integer $p$ one has 
    \begin{equation}\label{trunc}\sup_{x\in \mathbb Z} \mathbb E \bigg[ \bigg| \sum_{r=s}^{t} f(x + R^1(r)) \bigg|^m \bigg] ^{1/m} \leq C\| f\|_{\ell^1} |t-s|^{1/2}, 
    \end{equation}
    which easily yields tightness of the third coordinate in $C[0,T],$ since $R^2$ is independent and can be conditioned out as if it were deterministic (e.g. absorbed into the variable $x$ in the above bound). Now we need to identify the limit point. Firstly, by replacing $a_N$ by $a_N+ R^2(r) - R^2(r-k)$, we can reduce the claim to the case where $k=0$, since this operation does not change the limit of $N^{-1/2} a_N$.

    Let us first prove convergence of the first moments. Fix some $t\ge 0$, and let $\pi^N(g):= N^{-1/2} \sum_{s=0}^{Nt} \mathbb E[ g(a_N + R^1(r) - R^2(r))].$ This sequence $\pi^N$ is a sequence of measures on $\mathbb Z$, which has subsequential limits on finite subsets by \eqref{e1}, thus subsequential limits on all $\Bbb Z$ by a diagonal argument (in the topology of integration against finitely supported functions). Any subsequential limit $\pi$ must be an invariant measure for the Markov process $R^1-R^2$, since a direct ``Krylov-Bogoliubov" type argument gives $\pi P=\pi$. Thus by the aperiodicity assumption, any subsequential limit must be a scalar multiple of counting measure on $\mathbb Z.$ If we can show the existence of some $g_0$ such that $\lim_{N\to \infty} \pi^N(g_0)$ exists this would uniquely pin down the constant, thus showing that $\pi^N$ itself converges to the same multiple of counting measure. 

    Let us henceforth abbreviate $Y(r):= R^1(r) - R^2(r),$ which is a symmetric random walk on $\mathbb Z$ with increment distribution having some probability function $p_Y (y) := \mathbb P(Y(1) = y)$, in other words, the one-step probability transition for the Markov chain $Y$. Define the operator $Pg(x) := \sum_{y\in \mathbb Z} p_Y(x-y)g(y) .$ For all $g:\mathbb Z\to \mathbb Z$ such that $Pg$ is well-defined and bounded, one easily checks directly that the process $$M_g(r):= g(Y(r)) - \sum_{s=0}^{r-1} (P-\mathrm{Id})g(Y(s))$$
    is a martingale for the filtration generated by the process $Y$ (the so-called Dynkin martingale). Letting $g_0:= (P-\mathrm{Id}) u_0$ where $u_0(x):= |x|$, we thus see that $\pi^N(g_0) = \mathbb E[ u_0 ( N^{-1/2} Y_{Nt} + N^{-1/2} a_N) ] - u_0(N^{-1/2} a_N),$ which by Donsker's principle (and the assumptions on $a_N$) converges as $N\to \infty$ to the quantity $\mathbf E_{\mathrm{BM}} [ |B_t+a| ] - |a| = \mathbb E[ L_a^B(t)]$, for a Brownian motion $B$ of rate 2. By the variance-two assumption on the increments of $Y$, one can check that $\|g_0\|_{\ell^1} =1 .$ This whole discussion shows that whenever $\|g\|_{\ell^1}=1$, one has that $$\lim_{N\to \infty} \mathbb E [ N^{-1/2} \sum_{r=0}^{Nt} g(a_N+ Y(r)) ] = \mathbb E [ L_a^B(t)] =:h(t,a).$$
    Let $(U,V)$ be a joint limit point of $(N^{-1/2} Y_{Nt} , N^{-1/2} \sum_{r=0}^{Nt} f(a_N+ Y(r))). $ From the above limit, one can show that $V$ is a nondecreasing process, and $(U,V- L_a^U)$ must be a martingale in the joint filtration. This is because the above formula implies that $\mathbb E[ V(t) | \mathcal F_s] = h(t-s,U_s) = \mathbb E[ L_a^U(t)|\mathcal F_s].$ By the nondecreasing property of both $V$ and $L_a^U$, the difference $V-L_a^U$ is a martingale of bounded variation, thus we find that $V=L_a^U$, which is enough to imply the claim.
\end{proof}

\begin{lem}[Covariations of Brownian motions arising from strong mixing sequences] \label{mixl} Fix $m\in \mathbb N$, and let $(\Omega, A , \mathbb P)$ be some complete probability space. Let $(w_t^1, w^2_t,...,w^m_t)_{t\in \mathbb Z}$ be a (jointly) strictly stationary sequence on that probability space, with finite-range dependence. Assume that all coordinates have mean zero and finite moments of all orders. Then for any $T>0$ the $m$-tuple of processes given by $$N^{-1/2} \bigg(\sum_{r=0}^{Nt} w^1_r,..., \sum_{r=0}^{Nt} w^m_r \bigg)_{t\in [0,T]} $$ converges in law as $N\to \infty$, in the topology of $C([0,T],\Bbb R^m).$ The limit is given by an $m$-dimensional Brownian motion $(B^1,...,B^m)$ that is correlated, with covariance matrix at time 1 given by $(\Sigma_{ij})_{1\le i,j\le m}$ where $$\Sigma_{ij} = \sum_{t=-\infty}^\infty \mathrm{Cov} ( w^i_0,w^j_t ).$$
\end{lem}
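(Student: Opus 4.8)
The plan is to run the classical two-step scheme: convergence of finite-dimensional distributions together with tightness in $C([0,T],\mathbb R^m)$. The key structural remark is that for a $\mathbb Z$-indexed sequence ``finite-range dependence'' is exactly $M$-dependence: the vectors $(w^i_s)_i$ and $(w^j_t)_j$ are independent whenever $|s-t|>M$. Consequently, for every fixed $\theta=(\theta_1,\dots,\theta_m)\in\mathbb R^m$ the scalar process $W_t:=\sum_i\theta_i w^i_t$ is again strictly stationary, mean zero, $M$-dependent, and has moments of all orders, so the classical functional CLT for stationary $M$-dependent sequences applies to it and gives $N^{-1/2}\sum_{r=0}^{N\,\cdot}W_r\Rightarrow \sigma_W\,\beta$ in $C([0,T],\mathbb R)$ for a standard Brownian motion $\beta$, where $\sigma_W^2=\sum_{t\in\mathbb Z}\mathrm{Cov}(W_0,W_t)=\sum_{i,j}\theta_i\theta_j\Sigma_{ij}$. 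In particular the series defining $\Sigma_{ij}$ is a finite sum, and $\Sigma=(\Sigma_{ij})$ is symmetric (by stationarity) and positive semidefinite.

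For the full finite-dimensional distributions, fix $0=t_0<t_1<\dots<t_\ell\le T$ and put $D^i_q:=\sum_{r=Nt_{q-1}+1}^{Nt_q}w^i_r$, so $S^i_{Nt_p}=\sum_{q\le p}D^i_q$. Any linear functional $\sum_{i,p}\theta_{i,p}N^{-1/2}S^i_{Nt_p}$ rearranges into $\sum_q N^{-1/2}\sum_{r\in(Nt_{q-1},Nt_q]}\big(\sum_i\psi_{i,q}w^i_r\big)$ with $\psi_{i,q}=\sum_{p\ge q}\theta_{i,p}$. Deleting the last $M$ summands of each block changes the expression by $O(N^{-1/2})$ in $L^2$ (hence negligibly), and makes the blocks genuinely independent by $M$-dependence; within block $q$ the scalar $M$-dependent CLT from the first paragraph forces the rescaled block sum to converge to a centered Gaussian. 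A finite sum of independent, asymptotically Gaussian quantities is asymptotically Gaussian, so by the Cram\'er--Wold device $\big(N^{-1/2}S^i_{Nt_p}\big)_{i,p}$ converges in law to a centered Gaussian vector. Its covariance is identified from the exact formula $\mathrm{Cov}(S^i_{Ns},S^j_{Nt})=\sum_{a=0}^{Ns}\sum_{b=0}^{Nt}\mathrm{Cov}(w^i_0,w^j_{b-a})$: for $s\le t$ the inner sum equals $\Sigma_{ij}$ up to boundary corrections controlled by $M$, whence $N^{-1}\mathrm{Cov}(S^i_{Ns},S^j_{Nt})\to\min(s,t)\Sigma_{ij}$; together with the uniform $L^4$ bound below (to upgrade weak convergence to convergence of second moments) this pins the limiting Gaussian as the one with $\mathrm{Cov}(B^i_s,B^j_t)=\min(s,t)\Sigma_{ij}$.

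Tightness can be checked coordinatewise (this suffices for tightness of the $\mathbb R^m$-valued process). Expanding $\mathbb E\big[\big(\sum_{r=s+1}^{t}w^i_r\big)^4\big]$ and using $M$-dependence and the mean-zero assumption, only $O(|t-s|^2)$ of the $|t-s|^4$ index quadruples contribute (either all four indices lie in a window of length $O(M)$, or they split into two $O(M)$-diameter pairs), and each such term is bounded by $\mathbb E[(w^i_0)^4]$ via H\"older; hence $\mathbb E\big[|S^i_{Nt}-S^i_{Ns}|^4\big]\le C\,(N|t-s|)^2$ whenever $N|t-s|\ge 1$, and the Kolmogorov--Chentsov / Billingsley moment criterion gives tightness in $C([0,T],\mathbb R)$. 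Finite-dimensional convergence plus tightness yields convergence in $C([0,T],\mathbb R^m)$, and a continuous centered Gaussian process with covariance $\min(s,t)\Sigma_{ij}$ is precisely the correlated $m$-dimensional Brownian motion with time-$1$ covariance $\Sigma$, which is the claim. I expect the only non-mechanical ingredient to be the scalar functional CLT for stationary $M$-dependent sequences invoked in the first two paragraphs; one may either quote it from the literature or reprove it by the same block decomposition plus a Lindeberg argument used here for the increments.
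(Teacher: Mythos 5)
Your proposal is correct, and it shares the paper's overall architecture (tightness plus identification of the limit), with the tightness step being essentially identical: the same fourth-moment bound $\mathbb E[(\sum_{r=s}^t w^i_r)^4]\le C(t-s)^2$ obtained by counting the $O((t-s)^2)$ surviving index quadruples under $M$-dependence, followed by Kolmogorov--Chentsov. Where you diverge is the identification of the limit. You prove convergence of finite-dimensional distributions directly, via Cram\'er--Wold, a Bernstein-type block decomposition with $M$ terms deleted at the end of each block to force exact independence, and the classical scalar CLT for stationary $M$-dependent sequences; the covariance is then pinned down by the explicit computation of $N^{-1}\mathrm{Cov}(S^i_{Ns},S^j_{Nt})$ together with uniform integrability from the $L^4$ bound. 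The paper instead takes a softer route: it observes that stationarity and finite-range dependence force any continuous limit point to be a L\'evy process in $\mathbb R^m$, hence a Brownian motion with some drift and covariance, and then computes the covariance matrix by the same naive limit of prelimiting covariances (justified by the same uniform integrability). Your argument is more self-contained and quantitative but requires importing (or re-proving) the scalar $M$-dependent functional CLT; the paper's argument is shorter but leans on the structure theorem for continuous L\'evy processes and leaves the verification of asymptotically independent stationary increments implicit. Both are complete and standard; neither has a gap.
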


\begin{proof} There are two parts to the proof: the tightness of each coordinate in $C[0,T]$, and the identification of the limit as a Brownian motion in $\mathbb R^m$ with the correct covariance.
\\
\\
\textbf{Step 1. Tightness.} We claim that there is a bound for each $j\in\{1,...,m\}$: $$\mathbb E \bigg[ \bigg( \sum_{r=s}^{t} w^j_r \bigg)^4 \bigg] \leq C(t-s)^2,$$ where $C$ is independent of $s,t \in \mathbb Z$ with $t>s$. This bound follows by multiplying out the fourth power, and using the $\rho$-mixing property. As an example, the mean-zero assumption implies that $|\mathbb E[ w_i w_j^3]| \leq C\ind_{\{|i-j|\le C\}}$ for some $C,\alpha>0$ independent of $i,j\in\mathbb Z$. This means that the bulk of the consequential terms will be those of the form $\mathbb E[\omega_i^2\omega_j^2]$, of which there are $O((t-s)^2)$ many. 

The above bound immediately gives $$\mathbb E \bigg[ \bigg( N^{-1/2} \sum_{r=Ns}^{Nt} w^j_r \bigg)^4 \bigg]^{1/4} \leq C|t-s|^{1/2},$$ which implies the tightness of each coordinate in $C[0,T]$ by Kolmogorov-Chentsov.
\\
\\
\textbf{Step 2. Identification of the limit point.} Simply from the assumptions and the result of the last step, it is clear that any limit point must be a L\'evy process in $\mathbb R^m$, with continuous paths. Thus it must be a Brownian motion with some covariance matrix and some drift. The drift is zero because of the mean-zero assumption on the weights. It remains to compute the covariance matrix $\Sigma.$ Since Step 1 proved the tightness using a uniform fourth moment bound, we have uniform integrability, and thus we can just compute the naive limit of the time-one covariances of the prelimiting processes. In other words, we have that $\Sigma_{ij} = \lim_{N\to\infty} \Sigma_{ij}^N$, where $$\Sigma_{ij}^N := \mathrm{Cov} \bigg( N^{-1/2}\sum_{r=0}^N w^i_r, N^{-1/2} \sum_{r=0}^N w^j_r\bigg) = N^{-1} \sum_{r,s=0}^N \mathrm{Cov} (w^i_r,w^j_s). $$ Use the joint stationarity of all coordinates of the sequence $\omega$, and we see that the right side can be rewritten as $N^{-1} \sum_{\ell=-N}^N (N-|\ell|) \mathrm{Cov} ( \omega_0^i , \omega_\ell^j) = \sum_{\ell=-N}^N (1-\frac{|\ell|}{N}) \mathrm{Cov} ( \omega_0^i , \omega_\ell^j).$ The assumptions imply that $|\mathrm{Cov} ( \omega_0^i , \omega_\ell^j)|\leq C\ind_{\{|j|\le C\}}$, from which the convergence to the expression in the statement of the lemma is clear.
\end{proof}

\begin{thm}[Limit theorem for all relevant processes]\label{b}
    Fix $k\in \mathbb N$ and $T>0$. Let $\vec y^N\in N^{-1/2}\mathbb Z^k$ such that $\vec y^N\to \vec y = (y_1,...,y_k)\in\mathbb R^k$ as $N\to\infty$. Let $\mathbf R=(R^1,...,R^k)$ be distributed as $\mathbf P_{\mathrm{RW}^{\otimes k}}.$ Under the measure $\mathbf P_{\mathrm{RW}^{\otimes k}},$ we have the joint convergence in distribution as $N\to \infty$: 
    \begin{align*}\bigg( N^{-1/2} & \mathbf R(Nt), \;N^{-1/2} \mathbf{Off}_{Nt}(\mathbf R, N^{1/2} \vec y^N), \;N^{-1/2} \mathbf{Diag}_{Nt}(\mathbf R) , \;\sum_{a=0}^5 |\mathbf{Err}^a_{Nt} (\mathbf R, N^{1/2}\vec y^N)| \bigg)_{t\in [0,T]} \\ \stackrel{(d)}{\longrightarrow} & \bigg( \mathbf W, \sigma^2 \sum_{1\le i<j\le k} L_0^{W^i-W^j+y_i-y_j} , v(W^1+...+W^k) + (\gamma^2-v^2)^{1/2} \sqrt k B, \mathbf 0\bigg).\end{align*} Convergence here is in the topology of $C([0,T],\mathbb R^{k+3}).$ Here $W^1,...,W^k, B$ are standard Brownian motions, independent of one another, and $\mathbf W:=(W^1,...,W^k)$. Furthermore $\mathbf 0$ is the zero process. Also, $\sigma$ is as defined in \eqref{sigma}, and $\mathbf{Off}_s$, $\mathbf{Diag}_s$, and $\mathbf{Err}_s$ were all defined in the discussion following \eqref{imp2} above. Furthermore $L_0^X$ denotes the local time at zero of the semimartingale $X$. Finally $v$ is the coefficient defined in \eqref{v}, and $\gamma$ was defined in \eqref{gamma}.
\end{thm}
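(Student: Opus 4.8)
The plan is to prove tightness of the whole $\mathbb{R}^{k+3}$-valued process, and then to identify every subsequential limit, combining Donsker's theorem with Lemmas~\ref{addfl} and \ref{mixl} and the cumulant bound of Lemma~\ref{appx}. The first $k$ coordinates are tight and converge to $k$ independent standard Brownian motions $\mathbf{W}=(W^1,\dots,W^k)$ by Donsker's theorem. For $\mathbf{Off}$, reindexing the inner time-sum by $r_1=r,\ r_2=r-\ell$ and using that $\mathbf{f}(\ell,\cdot)$ is supported on $\{|\ell|\le M\}$ expresses $N^{-1/2}\mathbf{Off}_{Nt}(\mathbf{R},N^{1/2}\vec y^N)$ as a finite linear combination, over ordered pairs $j_1\ne j_2$ and $|\ell|\le M$, of terms $N^{-1/2}\sum_{r}\mathbf{f}\big(\ell,\,N^{1/2}(y^N_{j_1}-y^N_{j_2})+R^{j_1}(r)-R^{j_2}(r-\ell)\big)$, each tight in $C[0,T]$ by the moment bound \eqref{trunc} (conditioning $R^{j_2}$ out). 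For $\mathbf{Diag}$, the same reindexing together with the evenness $\mathbf{f}(t,x)=\mathbf{f}(-t,-x)$ (valid by stationarity) rewrites $\tfrac12\sum_{r_1,r_2=1}^{Nt}\mathbf{f}(r_1-r_2,R^j(r_1)-R^j(r_2))$ as $\sum_{r\le Nt}\Psi^j_r+O(1)$, where $\Psi^j_r$ is a function of the width-$M$ block of increments of $R^j$ around time $r$; hence $\mathbf{Diag}_{Nt}(\mathbf{R})=\sum_{j}\sum_{r\le Nt}(\Psi^j_r-c_2^*)+O(1)$ is a sum of $k$ independent, centered, strictly stationary, finite-range sequences, the centering $\mathbb{E}[\Psi^j_r]=c_2^*$ in the bulk being equivalent, via $\mathbb{E}[\,\cdot\,|\,\mathbf{R}]=0$ and the distributional identity $\sum_{r=0}^{s}\omega_{s-r,R(r)}\stackrel{(d)}{=}\sum_{r=0}^{s}\eta_r$, to $\mathbb{E}_{\mathbf{R}}[\,\cdot\,]=\kappa_2(\eta_1+\dots+\eta_{Nt})=2c_2^* Nt+O(1)$ from Lemma~\ref{3.1}. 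Tightness of $N^{-1/2}\mathbf{Diag}_{N\bullet}$ then follows from the fourth-moment bound of Step~1 of Lemma~\ref{mixl} and the Kolmogorov-Chentsov criterion. Finally $\sum_a|\mathbf{Err}^a_{Nt}|\to0$ uniformly on $[0,T]$: one has $|\mathbf{Err}^5_{Nt}(N^{-1/4};\cdot)|\le C'ktN^{-1/4}$ pathwise from Lemma~\ref{appx} (the family $\{\omega_{Nt-r,\,y_j+R^j(r)}\}$ being, conditionally on $\mathbf{R}$, bounded and finite-range dependent, so its $n$-th cumulant is $\le C^nn!$ times the number of its terms); the centered diagonal pieces $\mathbf{Err}^1,\mathbf{Err}^2$ are $\lambda^3=N^{-3/4}$ resp.\ $\lambda^4=N^{-1}$ times a sum over $r$ of a centered finite-range increment-functional, hence of size $O(N^{1/2})$ --- for $\mathbf{Err}^2$ it is essential that the subtracted constant $c_4^*-\tfrac12\gamma^2$ absorbs the bulk $O(Nt)$ contribution of $\mathrm{Var}\big(\kappa_2(\,\cdot\,|\,\mathbf{R})\big)$ that appears in $\tfrac{1}{4!}\,\mathbb{E}_{\mathbf{R}}\big[\kappa_4(\,\cdot\,|\,\mathbf{R})\big]$ --- so both are $o(1)$; and for the off-diagonal pieces $\mathbf{Err}^3,\mathbf{Err}^4$ a nonzero joint cumulant forces the space-time points to cluster within diameter $O(M)$, so by the local CLT the expected number of contributing configurations is $O(N^{1/2})$, again killed by the $\lambda^3,\lambda^4$ prefactors; a maximal inequality upgrades all of this to uniform convergence.

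To identify a subsequential limit, the random-walk coordinates give $\mathbf{W}$ and the error coordinate gives $\mathbf{0}$. For the off-diagonal limit, decompose $\mathbf{f}(\ell,\cdot)=\mathbf{f}(\ell,\cdot)_+-\mathbf{f}(\ell,\cdot)_-$ and apply Lemma~\ref{addfl} to the nonnegative parts: the limit of each is a multiple of the local time $L_0^{W^{j_1}-W^{j_2}+y_{j_1}-y_{j_2}}$, a measurable functional of $\mathbf{W}$, so in any subsequential limit all of these pieces are the corresponding functionals of $\mathbf{W}$. Summing over $|\ell|\le M$ with $\sum_{\ell,x}\mathbf{f}(\ell,x)=\sigma^2$, and over ordered pairs with $L_0^{Y}=L_0^{-Y}$, gives $N^{-1/2}\mathbf{Off}_{Nt}\to\sigma^2\sum_{i<j}L_0^{W^i-W^j+y_i-y_j}$. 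For the diagonal limit, apply Lemma~\ref{mixl} to the $\mathbb{R}^2$-valued stationary finite-range sequence $(X^j_r,\ \Psi^j_r-c_2^*)_{r}$, where $X^j_r$ denotes the $r$-th increment of $R^j$; the $k$ such sequences are mutually independent. Since $\mathrm{Var}(X^j_1)=1$, the limit is, for each $j$, a two-dimensional Brownian motion $(W^j,G^j)$ whose time-$1$ covariance matrix has diagonal entries $1,\gamma^2$ and off-diagonal entry $v$ --- these being $\sum_{\tau}\mathrm{Cov}(X^j_0,\Psi^j_\tau)$ and $\sum_{\tau}\mathrm{Cov}(\Psi^j_0,\Psi^j_\tau)$, which coincide, after unwinding, with the constants $v$ of \eqref{v} and $\gamma^2$ of \eqref{gamma} (the factors $\tfrac12$ and $\tfrac14$ there compensating for the sign/symmetrization reductions used in producing $\Psi^j$). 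Writing $G^j=vW^j+(\gamma^2-v^2)^{1/2}B^j$ with $(W^j,B^j)$ a pair of independent standard Brownian motions --- possible because that $2\times2$ matrix is positive semidefinite, so $\gamma^2\ge v^2$ --- and using independence across $j$ to see that $B^1,\dots,B^k$ are mutually independent and jointly independent of $\mathbf{W}$, we obtain $N^{-1/2}\mathbf{Diag}_{Nt}=\sum_{j}N^{-1/2}\sum_{r}(\Psi^j_r-c_2^*)+o(1)\to v(W^1+\dots+W^k)+(\gamma^2-v^2)^{1/2}\sum_{j}B^j$, and $\sum_j B^j\stackrel{(d)}{=}\sqrt{k}\,B$ for a standard Brownian motion $B$ independent of $\mathbf{W}$. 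Every coordinate of the resulting limit is thus a measurable functional of the single Gaussian object $(\mathbf{W},B^1,\dots,B^k)$ --- the local times through $\mathbf{W}$, the diagonal through $\mathbf{W}$ and $B^1+\dots+B^k$ --- whose joint law is fully determined; combined with the tightness above, this forces the claimed joint convergence in $C([0,T],\mathbb{R}^{k+3})$.

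The main difficulty is the identification of the diagonal limit: one has to recognize $\mathbf{Diag}$ as a time-sum of a finite-range functional of the \emph{increments} (rather than the positions) of each walk, verify that $c_2^*$ is exactly the bulk mean of that functional, and then extract from Lemma~\ref{mixl} a limiting Brownian motion $G^j$ that is correlated with, but \emph{not} a measurable function of, $W^j$ --- it is the $W^j$-correlated part of $G^j$ that gives the Girsanov shear $v$, and its orthogonal part that gives the genuinely new noise $B$. Matching the covariances coming out of Lemma~\ref{mixl} with the explicit constants \eqref{v}--\eqref{gamma}, and pinning down the correct centering $c_4^*-\tfrac12\gamma^2$ in $\mathbf{Err}^2$, is the most delicate bookkeeping; the remaining error estimates, though lengthy, are routine given Lemma~\ref{appx}.
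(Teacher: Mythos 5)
Your proposal is correct and follows essentially the same route as the paper's proof: the same lag-slicing decompositions of $\mathbf{Off}$ and $\mathbf{Diag}$, identification of the local-time limit via Lemma \ref{addfl} and of the correlated Brownian (Girsanov/shear) limit via Lemma \ref{mixl}, the same correction $c_4^*-\tfrac12\gamma^2$ arising from the failure of the fourth cumulant to commute with the random-walk expectation, and the cumulant bound underlying Lemma \ref{appx} for the tail $\mathbf{Err}^5$. The only cosmetic difference is that you assemble the joint limit by expressing every coordinate as a functional of the single Gaussian object $(\mathbf W,B^1,\dots,B^k)$, whereas the paper's Step 7 pins down the limit point by matching pairwise marginals; these are equivalent.
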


\begin{proof} We break the proof into seven steps.
\\
\\
    \textbf{Step 1. Convergence of ``$\mathbf R$."} Marginally, it is a simple consequence of Donsker's invariance principle that $(N^{-1/2} \mathbf R(Nt))_{t\in [0,T]}$ converges in law (in the topology of $C[0,T])$ to a standard $k$-dimensional Brownian motion $\mathbf W.$
\\
\\
    \textbf{Step 2. Convergence of ``$\mathbf{Off}$."} We will show that for all pairs $(i,j)\in \{1,...,k\}^2$ with $i\ne j$, one has joint convergence of the triple as $N\to \infty$:
    \begin{align*}\bigg( N^{-1/2} & R^i(Nt), N^{-1/2} R^j(Nt) , N^{-1/2} \sum_{r_1,r_2=1}^{Nt}  \mathbf f\big( r_1-r_2, y_{i}^N-y_{j}^N + R^{i} (r_1) - R^{j}(r_2)\big)\bigg)_{t\in [0,T]} \\& \stackrel{(d)}{\longrightarrow} \big( X,Y , \sigma^2 \cdot L_0^{X-Y+y_i-y_j} \big),
    \end{align*}
 where $X,Y$ are two independent real-valued Brownian motions. This will be enough, as explained in Step 7 below. Without loss of generality assume that $(i,j)=(1,2)$. 

    Let us first write 
    \begin{equation}\label{decomp2}\sum_{r_1,r_2=1}^s \mathbf f(r_1-r_2, a + R^1(r_1)-R^2(r_2)) = \sum_{j=-\infty}^\infty O^a_j (s),
    \end{equation}
    where $$O_j^a(s):= \sum_{r=|j|}^{s-|j|} \mathbf f(j, a + R^1(r) - R^2(r+j))$$
    We will show for each $m\in \mathbb N$ the joint convergence of the $(m+2)$-tuple of processes $$(N^{-1/2} O^{a_N}_{-m} (Nt) ,..., N^{-1/2} O_m^{a_N}(Nt), N^{-1/2} R^1_{Nt} , N^{-1/2} R^2_{Nt} ) \stackrel{(d)}{\longrightarrow} (\lambda_{-m} L_0^{X-Y+a} ,..., \lambda_m L_0^{X-Y+a} , X,Y)$$
    whenever $N^{-1/2} a_N\to a$, where $X,Y$ are two independent Brownian motions and where $\lambda_j = \sum_{x\in \mathbb Z} \mathbf f(j,x).$ This is actually immediate from Lemma \ref{addfl}, since for any limit point, the law of the triple given by last two coordinates with any other coordinate has the right law, which characterizes the full $(m+2)$-tuple in this case, since any of the first $m$ coordinates is a deterministic path functional of the last two coordinates.
    
    Note that since 
    $\mathbf f$ is finitely supported, the above is enough to show the desired claim by taking $m$ large enough. 
\\
\\
    \textbf{Step 3. Convergence of ``$\mathbf{Diag}."$ } Here we prove joint convergence of the pair $$\big( N^{-1/2} \mathbf R(Nt), N^{-1/2}\mathbf{Diag}_{Nt}(\mathbf R))_{t\in [0,T]}\stackrel{(d)}{\longrightarrow} (\mathbf W, v(W^1+...+W^k)+(\gamma^2-v^2)^{1/2} \sqrt k B)$$ for some Brownian motion $B$ that is independent of the $k$-dimensional standard Brownian motion $\mathbf W$, and $v,\gamma$ are the real numbers from the theorem statement. This will be enough as explained in Step 7.

    Again we are going to use a ``slicing" argument as we did with $\mathbf{Off}$ in Step 2. More precisely, write 
    \begin{equation}\label{decomp1}\tfrac12 \sum_{r_1,r_2=0}^s \mathbf f( r_1-r_2, R^i(r_1) - R^i(r_2) ) \;\;- \;\;c_2^* s =: E_i(s)+\sum_{p=-\infty}^\infty D^i_p ( s) ,
    \end{equation}
    where 
    \begin{align*}D^i_p(s) &:= \tfrac12 \sum_{r=|p|}^{s-|p|}\big( \mathbf f( p, R^i(r) - R^i(r+p) )  - \mathbb E[ \mathbf f( p, R^i(r) - R^i(r+p))]\big) \\ E_i(s) &:= \tfrac12 \sum_{r_1,r_2=0}^s \mathbb E[\mathbf f( r_1-r_2, R^i(r_1) - R^i(r_2) )] \;\;- \;\;c_2^* s
    \end{align*}
    We will first show that $E_i$ is irrelevant in the limit. First note that it is nonrandom. Second, with $\eta_s$ as defined after Theorem \ref{mr}, note by conditioning that $\mathbb E[\mathbf f( p, R^i(r) - R^i(r+p))]$ does not depend on $r\in \mathbb Z$ and is in fact equal to $\mathbb E[ \eta_0\eta_p]$. Thus $E_i(s)=\sum_{r=-s}^s (s-|r|) \mathbb E[ \eta_0\eta_r]\;-\; c_2^* s.$ First note that $\sum_{r=-s}^s |r| \mathbb E[\eta_0\eta_r]$ converges absolutely as $s\to \infty$, by the strong mixing and mean zero property of $\eta$. In particular $\sum_{r=-s}^s |r| \mathbb E[\eta_0\eta_r]$ is $O(1)$ as $s\to \infty$, and can thus be disregarded due to the scaling of the process $\mathbf{Diag}$ by $N^{-1/2}$ as in the theorem statement. Thus we simply need to show that $s \big( \sum_{r=-s}^s \mathbb E[\eta_0\eta_r] \;- c_2^*)$ vanishes under the scalings. But by the expression for $c_2^*$ in Lemma \ref{3.1}, and the finite range dependence of the sequence $\eta_s$, this quantity is eventually zero for large enough $s$, as desired.
    
    Next, we show that for each $m\in \mathbb N$ the joint convergence of the $(2m+2)k$-tuple of processes $$\big( \big( N^{-1/2} D^i_{p} (Nt)\big)_{-m\le p \le m, 1\le i \le k} , N^{-1/2} \mathbf R(Nt) \big)_{t\in [0,T]} \stackrel{(d)}{\longrightarrow} \big( \big( B^i_p)_{-m\le p \le m, 1\le i\le k} , \mathbf W\big)$$
    where $\big( \big( B^i_p)_{-m\le p \le m, 1\le i\le k} , \mathbf W\big)$ is a Brownian motion in $\mathbb R^{(2m+2)k}$ whose covariations are given by 
    \begin{align*}
        \langle B^i_p ,B^j_q\rangle &=\tfrac14  t \delta_{ij} \sum_{a,b = -\infty}^\infty \mathrm{Cov} ( \mathbf f( p, R^i(a+p) -R^i(a)) , \mathbf f( q, R^i(b+q)-R^i(b))),
        \\ \langle B^i_p, W^j\rangle_t  &=\tfrac12 t\delta_{ij} \sum_{b = 0}^\infty \mathrm{Cov} ( R^i(1) -R^i(0)) , \mathbf f( p, R^i(b+p)-R^i(b))),
        \\ \langle W^i,W^j \rangle_t &=  t\delta_{ij}.
    \end{align*}
    The proof of this is actually immediate from the result of Lemma \ref{mixl} and a short calculation, noting in particular that the \textit{time increments} of the $(2m+2)k$-tuple of processes given by $\big( \big( N^{-1/2} D^i_{p} (Nt)\big)_{-m\le p \le m, 1\le i \le k} , N^{-1/2} \mathbf R(Nt) \big)$ do form a (jointly) strongly mixing sequence on the probability space of the random walk $(R^1,...,R^k)$ as needed to apply that lemma. In turn, this is because for a stationary sequence of random variables of the form $w^{ij}_t := \mathbf f(j, R^i(j+t)-R^i(t)) ,$ the random variables $w^{ij}_s,w^{ij}_t$ are independent unless $|t-s|\le |j|,$ simply by independence of the increments of the random walks $R^i$.

    With the above convergence proved for each $m$, one now takes a sum. 
    hen one matches the resulting covariation processes with the processes given in the theorem statement, noting that $v = \langle \sum_{p\in\mathbb Z} B^i_p , W^i\rangle_1$ for each $1\le i \le k$, and likewise $\gamma^2 = \langle \sum_{p\in\mathbb Z} B^i_p \rangle_1 $ for each $1\le i \le k$.
\\
\\
    \textbf{Step 4. Vanishing of the ``diagonal" error terms.} Here we will control $\mathbf{Err}^1$ and $\mathbf{Err}^2,$ showing that they converge to 0 in probability (in the topology of $C[0,T]).$ This is similar to Step 3, but now the extra factors of $N^{-1/4}$ yield convergence in probability to 0. 
    
    We explain in some detail the proof for $\mathbf{Err}^2$ since obtaining the constant drift of order $c_4^* - \frac12 \gamma^2$ is a nontrivial task. Define the function $g:\mathbb Z^8 \to \mathbb R$ by the formula $g(\vec r, \vec x):= \kappa_{(4)} ( \omega_{r_1,x_1} ,..., \omega_{r_4,x_4}).$ We are going to see that the appearance of the term $\frac12 \gamma^2$ in the drift part of $\mathbf{Err}^2$ is a consequence of the fact that the order-four cumulant does not commute with the random walk expectation. More precisely, we have $\mathbf E_{\mathrm{RW}} [ g(r_1,...,r_4, R(r_1), ..., R(r_4))] \neq \kappa_{(4)} ( \omega_{r_1, R(r_1)} ,..., \omega_{r_4, R(r_4)}) $ where the random walk $R$ is independent of the environment $\omega$. To calculate the difference of the two, one uses the so-called ``law of total cumulance" \cite{Bri}. Thanks to the mean-zero assumption on the weights $\omega$, we can let $\mathbf f(t,x):= \mathbb E[\omega_{0,0}\omega_{t,x}]$, and a simple calculation then shows that this difference is explicitly given by 
    \begin{align*}
        \mathbf E_{\mathrm{RW}} &[ g(r_1,...,r_4, R(r_1), ..., R(r_4))] - \kappa_{(4)} ( \omega_{r_1, R(r_1)} ,..., \omega_{r_4, R(r_4)}) \\&= \mathrm{Cov} \big(\mathbf f(r_1-r_2, R(r_1)-R(r_2)), \mathbf f(r_3-r_4, R(r_3)-R(r_4)\big) \\ &\; +\mathrm{Cov} \big(\mathbf f(r_1-r_3, R(r_1)-R(r_3)), \mathbf f(r_2-r_4, R(r_2)-R(r_4)\big)\\ &\;\;+\mathrm{Cov} \big(\mathbf f(r_1-r_4, R(r_1)-R(r_4)), \mathbf f(r_2-r_3, R(r_2)-R(r_3)\big).
    \end{align*}
    Sum this expression over $0\le r_1 ,r_2,r_3,r_4 \le Nt$, then subtract $\gamma^2t/2$, and then multiply the result by $\lambda^4=N^{-1}$. The resultant expression can be shown to converge to $0$ in the topology of $C[0,T]$, using similar arguments as in Step 3. The reason that the drift is $\gamma^2/2$, is that the right side of the above expression has three terms, and the fourth order part of the cumulant expansion already comes with a factor of $1/4!$ so in total we get $3/4! = 1/8.$ However, the definition \eqref{gamma} of $\gamma^2$ already comes with a factor of $1/4$, thus we only need an extra factor of $\frac12$ on $\gamma^2$ to obtain $\frac18$.

    Finally, to bound $\mathbf{Err}^2$, one simply uses the above argument $k$ separate times, for each of the $k$ independent random walk coordinates $R^1,...,R^k$, and we obtain a total drift of $k \times \frac12 \gamma^2 = \frac{k}2 \gamma^2$.

    With the proof for $\mathbf{Err}^2$ completed, let us now sketch the proof for the vanishing of $\mathbf{Err}^1$. One can ask why the drift term of $\mathbf{Err}^1$ is the ``naive" one just given by $c_3^*$ without any correction. The reason that one does not see the appearance of the extra term such as $\gamma^2$ is that (by the mean-zero assumption on $\omega$), the order-three cumulant actually \textit{does} commute with the random walk expectation, unlike the order-four case above. Thus there is no issue as was the case for $\mathbf{Err}^2$, and the proof can be done similarly as in Step 3 without any additional modifications needed.
\\
\\
    \textbf{Step 5. Vanishing of the ``off-diagonal" error terms.} Here we will control $\mathbf{Err}^3$ and $\mathbf{Err}^4$, showing that they converge to 0 in probability (in the topology of $C[0,T]).$ This is similar to Step 2, but now the extra factors of $N^{-1/4}$ yield convergence in probability to 0 rather than a nontrivial stochastic process. Noting that the higher order cumulants of $\omega$ are finitely supported in the sense that $|\kappa_{(n)} ( \omega_{t_1,x_1},..., \omega_{t_n,x_n}) | \leq C \ind_{\{ \sum_{1\le i<j\le n}|t_i-t_j| + |x_i-x_j|\le C\}},$ for $n=3,4$, the proof is extremely similar to Step 2 and is omitted. 
\\
\\
    \textbf{Step 6. Vanishing of the remainder error term.} Here we will control the last error term $\mathbf{Err}^5,$ showing that it converges to 0 in probability (in the topology of $C[0,T]).$ 

    From the assumption Assumption \ref{ass1} on the environment, and the fact that $\omega_{0,0}$ has exponential tails, we claim the bound $$\bigg| \sum_{j_1,...,j_n=1}^k \sum_{r_1,...,r_n=0}^s \kappa_{(n)} \big( \omega_{s-r_1 , y_{j_1}+\gamma^{j_1}(r_1)},...,\omega_{s-r_n , y_{j_n}+\gamma^{j_n}(r_n)}\big) \bigg| \leq C^n n! \cdot s,$$ where $C$ is independent of $n,s$ and paths $\boldsymbol \gamma:\mathbb Z\to \mathbb Z^k$. This bound would not be true with the absolute value inside the sum. To prove it, for any deterministically bounded sequence $(w_t)_t$ with finite range dependence, it is true that the $n^{th}$ cumulant of $\sum_1^s w_t$ grows linearly in $s$ with a prefactor of $C^n n!$, by \cite[Theorem 9.1.7]{Fer} (see Lemma \ref{appx} for a similar calculation). Apply this fact to the sequence $w_t: =\sum_{j=1}^k  \omega_{-t, y_j + \gamma^j (t)},$ and we get the above claim.
    
    Consequently we immediately obtain a bound \begin{equation}\label{err5}\sup_{\vec y} \sup_{\boldsymbol \gamma} | \mathbf{Err}^5_s ( \lambda; \boldsymbol\gamma; \vec y) |\leq s \cdot \sum_{n=5}^\infty (C\lambda)^n \leq C's \lambda^5
    \end{equation}
    where the last inequality is valid for sufficiently small $\lambda>0$. Thus, under the scalings $(\lambda, t) \to (N^{-1/4}, Nt)$, we actually obtain the deterministic bound 
    $$\sup_{\vec y} \sup_{t\in [0,T] \cap (N^{-1}\mathbb Z)} |\mathbf{Err}^5_{Nt} (N^{-1/4} ; \mathbf R ; \vec y) | \leq CN^{-1/4},
    $$
    which clearly implies the vanishing.
\\
\\
    \textbf{Step 7. Putting everything together.} Finally, we prove the theorem by putting together the results of Steps 1-6. The tightness of all coordinates has been proved in previous steps. Consider any joint limit point $(\mathbf W, L , D, E)$ of the 4-tuple of processes in the theorem statement. The result of step 2 implies that the marginal law of the pait $(\mathbf W,L)$ must be equal to $(\mathbf W,\sigma^2  \sum_{1\le i<j \le k} L_0^{W^i-W^j +a_i-a_j})$, which forces $L= \sigma^2  \sum_{1\le i<j \le k} L_0^{W^i-W^j +a_i-a_j}.$ The result of Step 3 implies that the marginal law of the pair $(\mathbf W, D)$ must be equal to $(\mathbf W, v(W^1+...+W^k ) +(\gamma^2-v^2)^{1/2} k^{1/2} B)$ for a standard $(k+1)$-dimensional Brownian motion $(\mathbf W, B).$ The result of Steps 4-6 shows that $E$ must be the zero process. Putting these facts together uniquely characterizes the full 4-tuple $(\mathbf W, L, D, E)$ and thus proves uniqueness of the limit point.
\end{proof}

With the limit theorem proved, we now prove some bounds on all the processes that will be needed for uniform integrability later.

\begin{prop}[Exponential moment bounds on all processes.]\label{c}
    There exists $C>0$ such that uniformly over all $t >0$ and $\lambda$ in a small neighborhood of the origin, we have a bound of the form $$\sup_{\vec y \in \mathbb Z^d} \mathbf E_{\mathrm{RW}^{\otimes k}} \bigg[ e^{\lambda \big(|\mathbf{Off}_t(\mathbf R,\vec y)| + |\mathbf{Diag}_t(\mathbf R)| + \sum_{a=0}^5 |\mathbf{Err}^a_t(\mathbf R,\vec y)| \big)} \bigg]\leq Ce^{C\lambda^2 t}. $$
\end{prop}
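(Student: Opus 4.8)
\noindent The plan is to estimate the exponential moment of each of the nonnegative terms $|\mathbf{Off}_t|$, $|\mathbf{Diag}_t|$, $|\mathbf{Err}^0_t|,\dots,|\mathbf{Err}^5_t|$ appearing in the exponent separately and then recombine by the generalized H\"older inequality; this reduces matters to proving, for each such term $A_t$, that $\sup_{\vec y}\mathbf E_{\mathrm{RW}^{\otimes k}}[e^{\lambda'|A_t|}]\le Ce^{C(\lambda')^2 t}$ for all $\lambda'$ in a fixed neighbourhood of the origin. Since $e^{\lambda'|A_t|}\le e^{\lambda' A_t}+e^{-\lambda' A_t}$ and $-A_t$ is of the same structural form as $A_t$, it suffices to bound $\mathbf E_{\mathrm{RW}^{\otimes k}}[e^{\lambda' A_t}]$, and by expanding the exponential this follows once we prove \emph{sub-Gaussian moment bounds at scale $\sqrt t$}, namely $\sup_{\vec y}\mathbf E_{\mathrm{RW}^{\otimes k}}[A_t^m]\le C^m m^{m/2}t^{m/2}$ for all even $m$: indeed $\sum_m\tfrac{(\lambda')^m}{m!}C^m m^{m/2}t^{m/2}\le\sum_m\bigl(e\lambda'C\sqrt t/\sqrt m\bigr)^m\le C'e^{C'(\lambda')^2 t}$ uniformly in $t$ and $\lambda'$. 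It is worth stressing that the power $m^{m/2}$ here is essential: a merely sub-exponential bound $\mathbf E[A_t^m]\le C^m m!\,t^{m/2}$ would only give a finite exponential moment in the range where $\lambda'\sqrt t$ is small, whereas we need the bound for all $t$.

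The diagonal-type terms are the easy ones. For $\mathbf{Err}^5_t$ there is nothing to do beyond the deterministic bound $|\mathbf{Err}^5_t|\le C\lambda^5 t$ from \eqref{err5}, which gives $e^{\lambda'|\mathbf{Err}^5_t|}\le Ce^{C(\lambda')^2 t}$ for $|\lambda'|$ small, and $\mathbf{Err}^0_t\equiv 0$ by the mean-zero hypothesis on $\omega$. For $\mathbf{Diag}_t$ and the diagonal errors $\mathbf{Err}^1_t,\mathbf{Err}^2_t$, I would invoke the decompositions carried out in Steps 3--4 of the proof of Theorem \ref{b}: after subtracting a deterministic $O(1)$ boundary contribution, each of these processes equals a fixed power of $\lambda$ times a finite linear combination---over the $k$ walks and over the finitely many ``slice'' offsets $|p|\le M$---of \emph{centred} partial sums $\sum_{r=1}^t\bar w_r$ of a bounded, strictly stationary, finite-range-dependent sequence $(\bar w_r)$ on the random-walk probability space (built from the local increments of a single $R^i$, and with no dependence on $\vec y$). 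For any such centred partial sum the cumulant bound of Lemma \ref{appx} applies verbatim on the random-walk probability space and gives $|\kappa_n(\sum_{r=1}^t\bar w_r)|\le C^n n!\,t$; since $\kappa_1=0$, summing the cumulant series yields $\log\mathbf E_{\mathrm{RW}^{\otimes k}}[e^{\theta\sum_r\bar w_r}]=\sum_{n\ge2}\tfrac{\theta^n}{n!}\kappa_n(\sum_r\bar w_r)\le\sum_{n\ge2}(C\theta)^n t\le 2C^2\theta^2 t$ for $|\theta|$ small. Recombining the $O(1)$ many pieces by H\"older and absorbing the bounded corrections into the prefactor settles $\mathbf{Diag}_t$; the extra powers of $\lambda$ only improve matters for $\mathbf{Err}^1_t,\mathbf{Err}^2_t$.

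The substantive case is $\mathbf{Off}_t$, together with the off-diagonal errors $\mathbf{Err}^3_t,\mathbf{Err}^4_t$---the latter involve a collision between two distinct walks in every nonvanishing term and so are controlled in exactly the same way, up to extra powers of $\lambda$. Since $\mathbf f$ has finite support and the walks have bounded increments, $|\mathbf{Off}_t(\mathbf R,\vec y)|\le C\sum_{i<j}\mathcal L^{ij}_t$ with $\mathcal L^{ij}_t:=\sum_{r=1}^t\ind_{\{|R^i(r)-R^j(r)-(y_j-y_i)|\le C'\}}$ a collision local time, so by H\"older over the finitely many pairs it is enough to prove $\sup_x\mathbf E_{\mathrm{RW}}[(\sum_{r=1}^t g(x+R^1(r)))^m]\le C^m m^{m/2}t^{m/2}$ for a finitely supported $g\ge0$, conditioning out the second walk exactly as in the proof of Lemma \ref{addfl} (this also produces the uniformity in $\vec y$). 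Expanding the $m$-th power, symmetrising the time indices, and inserting the iterated local-CLT estimate \eqref{e2} bounds the left-hand side by $m!\,\|g\|_{\ell^1}^m\sum_{1\le r_1\le\cdots\le r_m\le t}\prod_{a=1}^m 1\wedge(r_a-r_{a-1})^{-1/2}$; one then estimates the inner sum by comparison with the Dirichlet integral $\int_{\{g_a\ge0,\ \sum_a g_a\le t\}}\prod_a g_a^{-1/2}\,dg=\frac{\Gamma(1/2)^m}{\Gamma(m/2+1)}\,t^{m/2}$, and uses $m!/\Gamma(m/2+1)\le(2m)^{m/2}$, to arrive at $C^m m^{m/2}t^{m/2}$ as required. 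I expect this last combinatorial estimate to be the main obstacle: the constraint $r_m\le t$ must genuinely be used (discarding it leaves only $(C\sqrt t)^m$, with no $1/\Gamma(m/2+1)$ gain, which by the first paragraph is not enough), so one needs the renewal/Dirichlet-type bound on $\sum_{r_1\le\cdots\le r_m\le t}\prod_a(r_a-r_{a-1})^{-1/2}$ rather than a crude iteration of \eqref{trunc}. Feeding all the per-term estimates into the reduction of the first paragraph then yields the proposition.
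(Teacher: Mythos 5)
Your proposal is correct and follows essentially the same route as the paper's proof: reduce to per-term exponential moments via H\"older, obtain sub-Gaussian moment bounds at scale $\sqrt t$ (with the crucial $t^{m/2}/\Gamma(m/2+1)$ gain from the Dirichlet-integral estimate on the ordered time sum for the collision/local-time terms), and dispose of $\mathbf{Err}^5$ by the deterministic bound \eqref{err5}. The only variation is for the diagonal pieces, where you sum the cumulant series using the bound of Lemma \ref{appx} on the random-walk probability space, while the paper instead proves $\mathbf E[(\sum_r w_r)^{2p}]\le J^p\alpha^{2p}t^p$ directly by a pairing count; both give $\log\mathbf E[e^{\theta S_t}]\le C\theta^2 t$ for small $\theta$, so this is a cosmetic difference.
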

In particular, replacing $(\lambda, t)\to (N^{-1/2}\lambda, Nt)$ leaves the right side unchanged, which is important for uniform integrability under the relevant scaling of the processes as in Theorem \ref{b}.
\begin{proof} Again we have steps. 
\\
\\
\textbf{Step 1. Bounding ``$\mathbf{Diag}$."} First we state and prove the following useful bound. Assume $(w_t)_{t\in \mathbb Z}$ is a strictly stationary and fast mixing sequence of mean zero with $w_0$ deterministically bounded. More precisely assume $\|w_0\|_{L^\infty} \leq \alpha$ and assume a range of dependence at most $J\in \mathbb N$ for $(w_t)_t$. Then we have a bound 
\begin{equation}\label{useful}\mathbf E[ e^{\lambda (w_1+...+ w_t)} ] \leq 2e^{J^2\alpha \lambda^2 t},
\end{equation}
with $C$ a universal constant that is uniform over such sequences, and $t,J ,\alpha, \lambda >0$. To prove this bound, Taylor expand the exponential, and it amounts to showing that for positive integers $p$ we have a bound of the form $\mathbb E[ (w_1+...+w_t)^{2p}] \leq J^{p} \alpha^{2p} t^p,$ with $C$ independent of $t,p, J,\alpha$. Then one can just sum over $p$ to obtain the claim, using $e^{\lambda x} \leq 2 \sum_{p\ge 0}  \frac{\lambda^{2p}x^{2p} }{(2p)!}$, to verify the above bound. To prove the bound $\mathbb E[ (w_1+...+w_t)^{2p}] \leq J^{p}\alpha^{2p}  t^p,$ expand the $(2p)^{th}$ power and then use an argument similar to the proof of Step 1 in the proof of Lemma \ref{mixl}. The number of relevant terms will be on the order of the number of those $(2p)$-tuples of indices in $\{1,...,t\}$ that pair up with one another within distance $J$, which is indeed upper bounded by $ J^{p}  t^p. $ Each term in the sum can be upper bounded brutally by $\alpha^{2p}$ since $|w_i| \leq \alpha$, completing the proof of \eqref{useful}. 

Now, given the above bound, we are going to re-use some notations and facts from the proof of Theorem \ref{b}. Recall the decomposition \eqref{decomp1}, and recall that $E_i(s)$ was shown to be deterministic and $O(1)$, and is thus bounded independently of $s$ and can be disregarded. So we just need to show that $\sup_{a\in \mathbb Z} \mathbb E[ e^{\lambda \sum_{j=-\infty}^\infty D_j(t)}] \leq Ce^{C\lambda^2 t},$ where we have suppressed the superscript $i$ on $D^i_j$ to avoid confusion (one can just fix $i\in \{1,...,k\}$ as desired, henceforth). Choose a sequence $c(j)>0$ with $j\in \mathbb Z$ such that $\sum_j c(j) = 1$ and $|j|^2 c(j) \to 1$ as $j\to \infty$. Use Holder's inequality to say that $$ \mathbb E[ e^{\lambda \sum_{j=-\infty}^\infty D_j(t)}] \leq \prod_{j \in \mathbb Z} \mathbb E [ e^{\lambda c(j)^{-1} D_j(t)}]^{c(j)} \leq 2^{\sum_j c(j)} e^{\lambda^2 t\sum_j j^2 c(j)^{-1} \|\mathbf f(j,\bullet)\|_{\ell^\infty(\mathbb Z)}}.$$ In the last bound we used \eqref{useful} with $\lambda = c(j)$ and $J =j$, noting (as explained in Step 3 of the proof of Theorem \ref{b}) that the increments of $D_j$ do form a strong mixing sequence with a range of dependence equal to exactly $j$. Recall that $j\mapsto \|\mathbf f(j,\bullet)\|_{\ell^\infty(\mathbb Z)}$ is finitely supported by Assumption \ref{ass1}, thus the infinite sum in the exponential is equal to some finite constant, completing the proof.
\\
\\
\textbf{Step 2. Bounding ``$\mathbf{Off}$."} Recall \eqref{e2}, then sum over all indices $r_1\le ... \le r_m$, and we obtain that for $f:\mathbb Z \to \mathbb R_{\ge 0}$ of exponential decay, one has the bound 
\begin{align*}
    \sup_{a\in \mathbb R} \mathbf E_{\mathrm{RW} } \bigg[ \bigg( \sum_{r=0}^t f(R_r -a) \bigg)^m \bigg] &\leq m! \sup_{a\in \mathbb R}  \sum_{0\le r_1 \le ...\le r_m \le t} \mathbf E_{\mathrm{RW}}\bigg[  \prod_{j=1}^{m} f(R_{r_j} - a)\bigg] \\&\le m! \|f\|_{\ell^1(\mathbb Z)}^m \sum_{r_1 \leq ... \le r_m \le t} \prod_{j=0}^{m-1} 1\wedge (r_{j+1} - r_j)^{-1/2}.
\end{align*}
We claim that the sum over $r_1 \leq ... \le r_m \le t$ is upper-bounded by $C^m \|f\|_{\ell^1(\mathbb Z)}^m t^{m/2}/ (m/2)!$, where $(m/2)!$ should be interpreted using the Gamma function, and $C$ is independent of $t,m$. Indeed, if we multiply that sum by $t^{-m/2}$ it becomes a Riemann sum approximation for the iterated integral $\int_{0\le s_1 \le... \le 1} \prod_{j=0}^{m-1} (s_{j+1}-s_j)^{-1/2}ds_1 \cdots ds_m = 1/\Gamma(m/2).$ Consequently, \begin{align}
    \notag \sup_{a\in \mathbb R} \mathbf E_{\mathrm{RW} } & \bigg[ e^{\lambda \sum_{r=0}^t f(R_r -a)} \bigg] \\&\le \notag \sup_{a\in \mathbb R}  \sum_{m=0}^\infty \frac{\lambda^m}{m!} 
    \mathbf E_{\mathrm{RW} } \bigg[ \bigg( \sum_{r=0}^t f(R_r -a) \bigg)^m \bigg]\\& \notag \leq \sum_{m=0}^\infty \frac{C^m t^{m/2}\|f\|_{\ell^1(\mathbb Z)}^m}{(m/2)!} \\ &\leq 2 e^{C^2 \lambda^2 \|f\|_{\ell^1(\mathbb Z)}^2 t}.\label{b2}
\end{align}
Now, given the above bound, we are going to re-use some notations and facts from the proof of Theorem \ref{b}. Recall the decomposition \eqref{decomp2}. So we just need to show that $\sup_{a\in \mathbb Z} \mathbb E[ e^{\lambda \sum_{j=-\infty}^\infty O_j^a(t)}] \leq Ce^{C\lambda^2 t}.$ Choose a sequence $c(j)>0$ with $j\in \mathbb Z$ such that $\sum_j c(j) = 1$ and $|j|^2 c(j) \to 1$ as $j\to \infty$. Use Holder's inequality to say that $$ \mathbb E[ e^{\lambda \sum_{j=-\infty}^\infty O_j^a(t)}] \leq \prod_{j \in \mathbb Z} \mathbb E [ e^{\lambda c(j)^{-1} O^a_j(t)}]^{c(j)} \leq 2^{\sum_j c(j)} e^{\lambda^2 t\sum_j j^2 c(j)^{-1} \|\mathbf f(j,\bullet)\|_{\ell^1(\mathbb Z)}^2}.$$ In the last bound we used \eqref{b2} with $\lambda = c(j)$ and $f = \mathbf f(j,\bullet)$. Recall that $j\mapsto \|\mathbf f(j,\bullet)\|_{\ell^1(\mathbb Z)}^2$ is finitely supported by Assumption \ref{ass1}, thus the infinite sum in the exponential is equal to some finite constant, completing the proof.
\\
\\
\textbf{Step 3. Bounding the errors. } The proof to bound the ``diagonal" error terms $\mathbf{Err}^1$ and $\mathbf{Err}^2$ is very similar to the proof of bounding $\mathbf{Diag}$ in Step 1. The proof to bound the ``off-diagonal" error terms $\mathbf{Err}^3$ and $\mathbf{Err}^4$ is very similar to the proof to bound $\mathbf{Off}$ in Step 2. The last error term $\mathbf{Err}^5_t$ is deterministically bounded above by $C\lambda^5 t$ as proved in \eqref{err5}, stronger than required.
\end{proof}

\section{Proof of the main result}

Letting $Z^N_{s,t}(x,y)$ be as in \eqref{zn}, we now write out a relation for $Z^N$, valid for integer times $s<t<u$ and for $a,b\in \mathbb Z$:
$$Z^N_{s,u} ( a,b) = \sum_{y\in\mathbb Z} Z^N_{s,t}(a,y)Z^N_{t,u} ( y,b).$$
For the rescaled version $\mathcal Z^N_{s,t}$ in Theorem \ref{mr}, this relation yields the \textbf{propagator equation}, valid for times $s<t<u$ with $s,t,u\in N^{-1} \mathbb Z$ and for $a,b\in N^{-1/2} \mathbb Z$ 
\begin{equation}\label{propa}\mathcal Z^N_{s,u} ( a,b) = N^{-1/2} \sum_{y\in N^{-1/2} \mathbb Z} \mathcal Z^N_{s,t}(a,y)\mathcal Z^N_{t,u} ( y,b).
\end{equation}

\begin{prop}[Limit of the moments]\label{limmom} Fix $k\in \mathbb N$ and $\phi:\mathbb R\to\mathbb R$ continuous and bounded. Let $\vec y^N\in N^{-1/2}\mathbb Z^k$ such that $\vec y^N\to \vec y = (y_1,...,y_k)$ as $N\to\infty$, and let $t_N\in N^{-1}\mathbb Z$ such that $t_N\to t>0$ as $N\to \infty$. We have that 
\begin{align*}\lim_{N\to\infty} N^{-k/2} \sum_{x_1,...,x_k\in N^{-1/2}\mathbb Z}\mathbb E[ \mathcal Z^N_{0,t_N}&(x_1,y_1^N)\cdots \mathcal Z^N_{0,t_N}(x_k,y_k^N)] \prod_{j=1}^k \phi(x_j) \\&= \mathbf E_{\mathrm{BM}^{\otimes k}}^{(y_1,...,y_k)} \bigg[ e^{\sigma^2 \sum_{i<j} L_0^{W^i-W^j} (t) } \prod_{j=1}^k \phi(W^j_t+vt) \bigg].
\end{align*}
    Here $\sigma$ is as in \eqref{sigma}, and $v$ is as in \eqref{v}. The expectation on the right side is with respect to a $k$-dimensional Brownian motion $(W^1,...,W^k)$ started from $(y_1,...,y_k)$.
\end{prop}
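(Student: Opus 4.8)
The plan is to read the limit off directly from the moment representation in Proposition \ref{a}, the joint functional limit theorem in Theorem \ref{b}, and the exponential moment bound in Proposition \ref{c}. First I would apply Proposition \ref{a} with $t=t_N$ and endpoint vector $\vec y^N$, so that the left-hand side of the claimed identity equals
\begin{equation*}\mathbf E_{\mathrm{RW}^{\otimes k}}\Big[ e^{N^{-1/2}\mathbf{Off}_{Nt_N}(\mathbf R,N^{1/2}\vec y^N)+N^{-1/2}\mathbf{Diag}_{Nt_N}(\mathbf R)-\frac{\gamma^2 kt_N}{2}+\sum_{a=0}^5\mathbf{Err}^a_{Nt_N}(N^{-1/4};\mathbf R,N^{1/2}\vec y^N)}\prod_{j=1}^k\phi\big(y_j^N+N^{-1/2}R^j(Nt_N)\big)\Big].\end{equation*}
Since $\big|\sum_a\mathbf{Err}^a\big|\le\sum_a|\mathbf{Err}^a|$, Theorem \ref{b} together with Slutsky gives joint convergence in law, in $C([0,T],\mathbb R^{k+3})$, of $\big(N^{-1/2}\mathbf R(N\bullet),\,N^{-1/2}\mathbf{Off}_{N\bullet},\,N^{-1/2}\mathbf{Diag}_{N\bullet},\,\sum_a\mathbf{Err}^a_{N\bullet}\big)$ to $\big(\mathbf W,\ \sigma^2\sum_{i<j}L_0^{W^i-W^j+y_i-y_j},\ v(W^1+\cdots+W^k)+(\gamma^2-v^2)^{1/2}\sqrt k\,B,\ \mathbf 0\big)$. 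Composing with the continuous functional $(\mathbf w,\ell,d,\mathbf e)\mapsto e^{\ell(t)+d(t)-\gamma^2kt/2}\prod_j\phi(y_j+w^j(t))$, evaluated at the deterministic times $t_N\to t$ (using joint continuity of evaluation on $C([0,T])\times[0,T]$, continuity of $\phi$, and $y_j^N\to y_j$), the continuous mapping theorem shows that the integrand above converges in law to $e^{\sigma^2\sum_{i<j}L_0^{W^i-W^j+y_i-y_j}(t)+v(W^1_t+\cdots+W^k_t)+(\gamma^2-v^2)^{1/2}\sqrt k\,B_t-\gamma^2kt/2}\prod_j\phi(y_j+W^j_t)$, where $\mathbf W=(W^1,\dots,W^k)$ is a standard Brownian motion from the origin and $B$ an independent standard Brownian motion.

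To upgrade convergence in law to convergence of expectations I would verify uniform integrability of the integrand. Fixing $\varepsilon>0$ and using $|\phi|\le\|\phi\|_\infty$ together with the triangle inequality in the exponent, its $(1+\varepsilon)$-th absolute moment is at most a fixed multiple of $\mathbf E_{\mathrm{RW}^{\otimes k}}\big[e^{(1+\varepsilon)(|N^{-1/2}\mathbf{Off}_{Nt_N}|+|N^{-1/2}\mathbf{Diag}_{Nt_N}|+\sum_a|\mathbf{Err}^a_{Nt_N}|)}\big]$, which by Proposition \ref{c} and the scale invariance recorded immediately after it is bounded by $Ce^{C(1+\varepsilon)^2t_N}$, hence uniformly in $N$ since $t_N\to t$. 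Therefore the integrands are uniformly integrable, convergence in law passes to expectations, and the limit equals $\mathbf E\big[e^{\sigma^2\sum_{i<j}L_0^{W^i-W^j+y_i-y_j}(t)+v\sum_jW^j_t+(\gamma^2-v^2)^{1/2}\sqrt k\,B_t-\gamma^2kt/2}\prod_j\phi(y_j+W^j_t)\big]$.

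It remains to simplify this expectation into the stated form. Because $B$ is independent of $\mathbf W$ with $B_t\sim\mathcal N(0,t)$, I would integrate it out first, producing a factor $e^{(\gamma^2-v^2)kt/2}$ that combines with $e^{-\gamma^2kt/2}$ to leave $e^{-v^2kt/2}$, so the expectation becomes $\mathbf E\big[e^{\sigma^2\sum_{i<j}L_0^{W^i-W^j+y_i-y_j}(t)}\prod_je^{vW^j_t-v^2t/2}\prod_j\phi(y_j+W^j_t)\big]$. Recognizing $\prod_je^{vW^j_t-v^2t/2}$ as the Cameron--Martin density that shifts each coordinate of $\mathbf W$ by the common linear drift $s\mapsto vs$, and noting that the differences $W^i-W^j$ — hence all local times $L_0^{W^i-W^j+y_i-y_j}$ — are invariant under adding the same drift to every coordinate, the change of measure turns this into $\mathbf E\big[e^{\sigma^2\sum_{i<j}L_0^{W^i-W^j+y_i-y_j}(t)}\prod_j\phi(y_j+vt+W^j_t)\big]$ with $\mathbf W$ still from the origin; finally absorbing $y_j$ into the initial point (setting $W^j\leftarrow y_j+W^j$) converts $L_0^{W^i-W^j+y_i-y_j}$ to $L_0^{W^i-W^j}$ and $\phi(y_j+vt+W^j_t)$ to $\phi(W^j_t+vt)$, which is exactly the asserted formula with $(W^1,\dots,W^k)$ started from $(y_1,\dots,y_k)$. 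The one genuinely delicate point is the uniform integrability: one must use an exponent strictly above $1$ in Proposition \ref{c} (the nonnegative local-time term $\sigma^2\sum L_0$ makes the integrand potentially large) and invoke the precise scale invariance of that bound; everything else is bookkeeping of Gaussian constants and a Girsanov shift.
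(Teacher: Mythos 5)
Your proposal is correct and follows essentially the same route as the paper's proof: rewrite via Proposition \ref{a}, pass to the limit using Theorem \ref{b} with the uniform integrability supplied by Proposition \ref{c}, condition out the independent Brownian motion $B$, and finish with a Girsanov/Cameron--Martin shift. Your version merely spells out the continuous-mapping and uniform-integrability details that the paper leaves implicit.
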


\begin{proof}
The left side can be rewritten in terms of the processes $\mathbf{Diag}, \mathbf{Off}, \mathbf{Err}^a$ using Proposition \ref{a}. To take the limit of that expression, one uses the invariance principle of Theorem \ref{b} (taking this limit is allowed thanks to the uniform integrability guaranteed by the bound in Proposition \ref{c}). Notice that a naive limit will yield  $$\mathbf E_{\mathrm{BM}^{\otimes (k+1)}}^{(y_1,...,y_k, 0)} \bigg[ e^{\sigma^2 \sum_{i<j} L_0^{W^i-W^j} (t)  + v( W_t^1+... +W_t^k) +(\gamma^2-v^2)^{1/2}k^{1/2} B_t- \frac{k\gamma ^2t}2} \prod_{j=1}^k \phi(W^j_t) \bigg],$$ where the expectation is over a standard $(k+1)$-dimensional Brownian motion $(W^1,...,W^k,B)$ started from $(y_1,...,y_k,0).$ Using the independence of $B$ from $(W^1,...,W^k)$ we can condition out $B$, meaning that $$\mathbb E[ e^{(\gamma^2-v^2)^{1/2}k^{1/2} B_t - \frac12 (\gamma^2-v^2) kt} | W^1,...,W^k]=1.$$ Thus the above expression can be simplified to 
$$\mathbf E_{\mathrm{BM}^{\otimes k}}^{(y_1,...,y_k)} \bigg[ e^{\sigma^2 \sum_{i<j} L_0^{W^i-W^j} (t)  + v( W_t^1+... +W_t^k) - \frac{kv ^2t}2} \prod_{j=1}^k \phi(W^j_t) \bigg], $$where the independent Brownian motion $B$ has been conditioned out. Now an application of Girsanov's theorem gives the desired form, as in the statement of the proposition.
\end{proof}

We also have the following version of Proposition \ref{limmom}, where we allow for three times $(0,t,u)$ instead of just two times $(0,t)$.

\begin{prop}[Limit of the moments, generalized]\label{limmom2}
    Fix $k\in \mathbb N$ and let $\phi, \psi:\mathbb R\to\mathbb R$ continuous and bounded. Let $\vec y^N\in N^{-1/2}\mathbb Z^k$ such that $\vec y^N\to \vec y = (y_1,...,y_k)$ as $N\to\infty$, and let $t_N,u_N\in N^{-1}\mathbb Z$ such that $t_N\to t>0$ and $u_N\to u>t$ as $N\to \infty$. We have that 
\begin{align*}\lim_{N\to\infty} &N^{-k} \sum_{\substack{w_1,...,w_k \in N^{-1/2}\mathbb Z\\x_1,...,x_k\in N^{-1/2}\mathbb Z}}\mathbb E\bigg[ \prod_{j=1}^k \mathcal Z^N_{0,t_N}(w_j,x_j)\prod_{j=1}^k \mathcal Z^N_{t_N,u_N}(x_j,y_j^N)\bigg] \prod_{j=1}^k \phi(w_j)\psi(x_j) \\&= \mathbf E_{\mathrm{BM}^{\otimes k}}^{(y_1,...,y_k)} \bigg[ e^{\sigma^2 \sum_{i<j} L_0^{W^i-W^j} (u) } \prod_{j=1}^k \psi(W^j_t+vt)
\prod_{j=1}^k \phi (
W^j_u+vu) \bigg].
\end{align*}
\end{prop}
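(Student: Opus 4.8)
The plan is to run the proof of Proposition~\ref{limmom} essentially verbatim; the one genuinely new feature is that the $k$ independent Feynman--Kac random walks must now be decorated at \emph{two} time-slices instead of one. The first step is to record the two-slice analogue of the moment formula in Proposition~\ref{a}. Fix $j$: by the propagator relation \eqref{propa}, summing $\mathcal Z^N_{0,t_N}(w_j,x_j)\,\mathcal Z^N_{t_N,u_N}(x_j,y_j^N)$ over the joining variable $x_j$ produces $\mathcal Z^N_{0,u_N}(w_j,y_j^N)$, which is represented by a single Feynman--Kac walk $R^j$ on the time horizon $[0,Nu_N]$; keeping $x_j$ free, so that it may carry the weight $\psi$, simply means retaining the position of $R^j$ at the walk-time corresponding to environment-time $t_N$. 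In particular the environment enters the exponent exactly as $N^{-1/4}\sum_{j=1}^k\sum_{r=0}^{Nu_N}\omega_{Nu_N-r,\,y_j^N+R^j(r)}$, which is precisely the object whose cumulant expansion \eqref{ex}--\eqref{imp2} and diagonal/off-diagonal split generated $\mathbf{Off}$, $\mathbf{Diag}$ and the $\mathbf{Err}^a$. Replaying that expansion with final time $Nu_N$ gives the formula: the left-hand side of the proposition equals $\mathbf E_{\mathrm{RW}^{\otimes k}}\bigl[e^{\Phi_N(\mathbf R)}\,\Theta_N(\mathbf R)\bigr]$, where $\Phi_N(\mathbf R):=N^{-1/2}\mathbf{Off}_{Nu_N}(\mathbf R,N^{1/2}\vec y^N)+N^{-1/2}\mathbf{Diag}_{Nu_N}(\mathbf R)-\tfrac{\gamma^2 k u_N}{2}+\sum_{a=0}^5\mathbf{Err}^a_{Nu_N}(\mathbf R,N^{1/2}\vec y^N)$ and $\Theta_N(\mathbf R)$ is the product over $j$ of $\phi(y_j^N+N^{-1/2}R^j(Nu_N))$ and of $\psi$ evaluated at the rescaled position of $R^j$ at environment-time $t_N$ --- exactly Proposition~\ref{a} with its single observable replaced by this two-slice one.

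The second step is to pass to the limit as in the proof of Proposition~\ref{limmom}. For any $T>u$, Theorem~\ref{b} gives the joint convergence in $C([0,T],\mathbb R^{k+3})$ of $\bigl(N^{-1/2}\mathbf R(N\cdot),\,N^{-1/2}\mathbf{Off}_{N\cdot},\,N^{-1/2}\mathbf{Diag}_{N\cdot},\,\sum_a|\mathbf{Err}^a_{N\cdot}|\bigr)$ to $\bigl(\mathbf W,\ \sigma^2\sum_{i<j}L_0^{W^i-W^j+y_i-y_j},\ v(W^1+\cdots+W^k)+(\gamma^2-v^2)^{1/2}\sqrt k\,B,\ \mathbf 0\bigr)$; since evaluation of a continuous path at a fixed time is continuous and $t_N\to t$, $u_N\to u$, the two-slice observable $\Theta_N$ (bounded by $\|\psi\|_\infty^k\|\phi\|_\infty^k$ and a continuous function of the limiting ingredients) converges jointly with $\Phi_N$ and passes to the limit, the uniform integrability being furnished by Proposition~\ref{c} exactly as in Proposition~\ref{limmom}. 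The naive limit is the analogue of the one in the proof of Proposition~\ref{limmom}, but with time horizon $u$ in place of $t$: it is $\mathbf E^{(\vec y,0)}_{\mathrm{BM}^{\otimes(k+1)}}$ of $\exp\bigl(\sigma^2\sum_{i<j}L_0^{W^i-W^j}(u)+v(W^1_u+\cdots+W^k_u)+(\gamma^2-v^2)^{1/2}\sqrt k\,B_u-\tfrac{k\gamma^2 u}{2}\bigr)$ times $\prod_j\psi\prod_j\phi$ of the limiting Brownian positions at the two relevant time-slices. One then conditions out the independent Brownian motion $B$ via $\mathbb E\bigl[e^{(\gamma^2-v^2)^{1/2}\sqrt k\,B_u-\frac12(\gamma^2-v^2)ku}\mid\mathbf W\bigr]=1$, which leaves the drift $v(W^1_u+\cdots+W^k_u)-\tfrac{kv^2 u}{2}$, and applies Girsanov's theorem on $[0,u]$ to absorb this into the path shift $W^j_s\mapsto W^j_s+vs$ of every coordinate; because the shift acts on the \emph{whole path}, it converts \emph{both} decorations into their sheared forms simultaneously, yielding the right-hand side of the proposition.

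I would expect the only delicate point to be the bookkeeping in the first step: writing the two-slice moment formula correctly and, in particular, identifying the walk-time of $R^j$ at which the weight $\psi$ is applied --- equivalently, the limiting Brownian time-slice to which the joining variable $x_j$ converges under the scaling --- and then checking that the single Girsanov shift $W^j_s\mapsto W^j_s+vs$ acts consistently on both $\psi$ and $\phi$. Everything downstream --- the vanishing of all the $\mathbf{Err}^a$, the identification of the $\mathbf{Off}$-limit with Brownian local time and of the $\mathbf{Diag}$-limit with the Girsanov tilt, the exponential-moment bounds providing uniform integrability, and the final change of measure --- is imported without change from the proofs of Theorem~\ref{b} and of Propositions~\ref{c} and~\ref{limmom}.
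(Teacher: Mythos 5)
Your proposal is correct and follows essentially the same route as the paper: the paper's own proof is a short sketch that rewrites the two-slice moment as a single Feynman--Kac expectation over $k$ walks on the full horizon $[0,Nu_N]$ with the observable decorated at the intermediate and final times, re-runs the cumulant expansion of Proposition \ref{a}, and then invokes Theorem \ref{b}, Proposition \ref{c}, the conditioning-out of $B$, and Girsanov exactly as in Proposition \ref{limmom}. The one bookkeeping subtlety you flag (which walk-time of $R^j$ the joining variable $x_j$ corresponds to, given that the walks run backward in environment time) is present in the paper's sketch as well, so there is no substantive divergence.
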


Notice that by taking $\psi=1$ and using the propagator equation \eqref{propa}, the result of Proposition \ref{limmom2} actually implies the result of Proposition \ref{limmom}, thus it is strictly more general. We could formulate an even more general version with $m$ times rather than two or three, but we will not need it, and the proof for three times already gives the general idea.

\begin{proof}
    The proof follows the same overall strategy as the proof of Proposition \ref{limmom}. We will sketch the strategy. Similar to what we did in deriving \eqref{IMP}, one can write out the definitions to yield 
    \begin{align*} &\sum_{\substack{w_1,...,w_k \in \mathbb Z\\x_1,...,x_k\in \mathbb Z}}\mathbb E\bigg[ \prod_{j=1}^k  Z^N_{0,t}(w_j,x_j)\prod_{j=1}^k  Z^N_{t,u}(x_j,y_j^N)\bigg] \prod_{j=1}^k \phi(w_j)\psi(x_j)
        \\&=\mathbf E_{\mathrm{RW}^{\otimes k}} \bigg[ e^{\log\mathbb E \big[ e^{\lambda \sum_{j=1}^k  \sum_{r=0}^{u} \omega_{u-r, y_j+R^j(r)}}|R^1,...,R^k\big]} \cdot \prod_{j=1}^k \psi(y_j^N+R^j(t)) \phi(y_j^N+R^j(u))\bigg] .
    \end{align*}
    Then, exactly as before, Taylor expand $$\lambda \mapsto  \log\mathbb E \big[ e^{\lambda \sum_{j=1}^k  \sum_{r=0}^{u} \omega_{u-r, y_j+R^j(r)}}|R^1,...,R^k\big]$$ and a very similar case-by-case analysis for each term of the expansion (as done in Section 3) will lead to a representation of the form given in Proposition \ref{a}. Then one proves the limit theorems as in Section 4, which will lead to the required limit as done in Proposition \ref{limmom}. The details are very similar and omitted.
\end{proof}

We are now ready to start putting together the proof of Theorem \ref{mr}. First we set some notations. 

\begin{defn}We denote by $\mathcal M(\mathbb R^2)$ the space of locally finite and nonnegative Borel measures on $\mathbb R^2$. Equip $\mathcal M(\mathbb R^2)$ with the weakest topology under which the maps $\mu \mapsto \int_{\mathbb R^2} f\; d\mu$ are continuous for all $f\in C_c^\infty(\mathbb R^2)$.\end{defn}

Such a space turns out to be Polish by explicit construction of a complete separable metric. This choice of topology of $\mathcal M(\mathbb R^2)$ is so weak that one may verify that a family $M_N$ of $\mathcal M(\mathbb R^2)$-valued random variables is tight if and only if $\int_{\mathbb R^2} f\; dM_N$ is a tight family of $\mathbb R$-valued random variables, for all $f\in C_c^\infty(\mathbb R^2)$. The proof of this is left to the reader.

We also define $\mathbb R^4_{\uparrow}:= \{ (s,t,x,y) \in \mathbb R^4: s<t\}$, and $C(\mathbb R^4_{\uparrow})$ to be the set of continuous functions from $\mathbb R^4_\uparrow\to\mathbb R$. Equip $C(\mathbb R^4_{\uparrow})$ with the topology of uniform convergence on compact sets. Below, $\mathbb Q$ will denote rational numbers, but everywhere it could be replaced by any countable dense subset of $\mathbb R$ as desired.

\begin{proof}[Proof of Theorem \ref{mr}.]
    We are going to use the following classification theorem for the equation \eqref{she}, which was recently proved in \cite{Par24a}, heavily inspired by flow-based ideas developed for the 2d SHE by \cite{Ts24}:

    Let $M_{s,t}$ be a family of $\mathcal M(\mathbb R^2)$-valued random variables indexed by $-\infty <s\le t<\infty$ with $s,t\in\mathbb Q,$ all defined on some complete probability space $(\Omega,\mathcal F,\mathbb P)$. Suppose that the following are true: 
    \begin{enumerate}

    \item $M_{s_j,t_j}$ are independent under $\mathbb P$ whenever $(s_j,t_j)$ are disjoint intervals.

        \item For all $f\in C_c^\infty(\mathbb R^2)$ and all $\psi\in C_c^\infty(\mathbb R)$ with $\int_\mathbb R\psi=1$, we have that $$\int_{\mathbb R^4 } \epsilon^{-1} \psi(\epsilon^{-1}(y_1-y_2))\cdot f(x,z) M_{t,u}(dy_1,dz) M_{s,t}(dx,dy_2)  \to \int_{\mathbb R^2} f(x,z) M_{s,u}(dx,dz)$$ in probability as $\epsilon\to 0$, whenever $s<t<u$.

        \item For $k\in \mathbb N$ and $\phi,\psi\in C_c^\infty(\mathbb R)$ we have that \begin{align*}\mathbb E\bigg[ \bigg( \int_{\mathbb R^2}& \phi(x) \psi(y)M_{s,t}(dx,dy) \bigg)^k \bigg] \\&= \int_{\mathbb R^{n}} \prod_{j=1}^k \phi(x_j) \mathbf E_{\mathrm{BM}^{\otimes k}}^{(x_1,...,x_k)} \bigg[\prod_{j=1}^k \psi(W^j_{t-s}+v(t-s)) e^{\sigma \sum_{1\le i<j\le k} L_0^{W^i-W^j}(t-s)} \bigg] dx_1 \cdots dx_k.
        \end{align*}
        The expectation on the right side is with respect to a $k$-dimensional Brownian motion $(W^1,...,W^k)$ started from $(x_1,...,x_k)$, and $L_0^X$ denotes the local time at 0 of $X$.
    \end{enumerate}
    Then there exists a $C(\mathbb R^4_\uparrow)$-valued random variable $\tilde M$ defined on the same probability space, such that $(M_{s,t},f) = \int_{\mathbb R^2} f(x,z) \tilde M_{s,t}(x,z)dxdz$ almost surely, for all $(s,t)\in\mathbb Q^2$ (with $s<t$) and $f\in C_c^\infty(\mathbb R^2)$. 
    Furthermore, there exists a space-time white noise $\xi$ on $\mathbb R^2$, defined on the same probability space, with the property that $\tilde M_{s,t}$ are the \textit{propagators} of \eqref{she} driven by $\xi$: for all $s,x\in \mathbb R$ the function $(t,y)\mapsto \tilde M_{s,t}(x,y)$ is a.s. the It\^o-Walsh solution of the equation \eqref{she} started from time $s$ with initial condition $\delta_x(y)$. In particular, the finite-dimensional marginal laws $(M_{s_1,t_1},...,M_{s_m,t_m})$ are $\mathrm{uniquely\; determined}$ by Conditions (1) - (3), for any finite collection of indices $s_j\le t_j$.

    We divide the remainder of the proof into steps, first showing tightness and then verifying the three items individually.
\\
\\
    \textbf{Step 1. Tightness in $\mathcal M(\mathbb R^2)^{\{ (s,t)\in \mathbb Q^2: s\le t\}}.$} View each $\mathcal Z^N_{s,t}$ as an element of $\mathcal M(\mathbb R^2)$ by viewing it as a superposition of Dirac masses of weight $N^{-1} \mathcal Z^N_{s,t}(x,y)$ at each vertex $(x,y) \in (N^{-1/2} \mathbb Z)^2$. For each fixed and bounded sequence of $(s^N,t^N) \in (N^{-1}\mathbb Z_{\ge 0})^2$ the family of measures $(\mathcal Z^N_{s^N,t^N})_{N\ge 1}$ is tight in $\mathcal M(\mathbb R^2)$ because Proposition \ref{limmom} implies that $\mathbb E[ \mathcal Z^N_{s^N,t^N}(f)^k]$ remains bounded as $N\to \infty$, which in particular implies that $\{\mathcal Z_{s,t}^N(f)\}_N$ is a tight family of $\mathbb R$-valued random variables for each $f\in C_c^\infty(\mathbb R^2)$. 

    Thus for any finite sequence of indices $(s_1,t_1),..., (s_m,t_m)$ the $m$-tuple $(\mathcal Z^N_{s_1^N,t_1^N} ,..., \mathcal Z^N_{s^N_m,t^N_m})$ is tight in $\mathcal M(\mathbb R^2)^m$. By Kolmogorov's extension theorem and a diagonal argument, one can now take the projective limit of any consistent family of finite-dimensional marginal laws which are themselves limit points of the finite-dimensional marginals $(\mathcal Z^N_{s_1,t_1},...,\mathcal Z^N_{s_m,t_m})$ as $N\to\infty$. This yields the existence of at least one limit point, which is a probability measure on the product $\sigma$-algebra of the underlying probability space $\mathcal M(\mathbb R^2)^{\{ (s,t)\in \mathbb Q^2: s\le t\}}.$

    Next we need to argue the uniqueness of the limit point, which will be done in Steps 2-4 below by showing that the limit point must satisfy the three listed items above. 
\\
\\
    \textbf{Step 2. Verification of (1).} Recall the strong mixing condition Assumption \ref{ass1}. Let $(s_j,t_j)$ be disjoint intervals for $1\le j \le m$. Assume they are ordered so that $t_j< s_{j+1}.$ These intervals have some positive gap $c>0$ between one another, i.e., $t_j+c\le s_{j+1}$ for $1\le j \le m$. Letting $\mathscr G_N(s,t)$ denote the $\sigma$-algebra generated by $\{ Z^N_{a,b}(x,y): Ns\le a<b\le Nt;\; x,y\in \mathbb Z\}$, we see from Assumption \ref{ass1} that $\mathscr G_N(-\infty, t_j)$ and $\mathscr G_N(s_{j+1},\infty))$ are independent for sufficiently large $N$. This easily implies that for any limit point $M$, the random measures $M_{s_j,t_j}$ are independent of one another.
\\
\\
    \textbf{Step 3. Verification of (2).} This is the more difficult item to verify. Recall the propagator equation \eqref{propa}. We will show that if $s_N <t_N<u_N$ with $(s_N,t_N,u_N)\to (s,t,u)$, then \begin{align*}\limsup_{\e\to 0} \limsup_{N\to\infty} \mathbb E \bigg[ \bigg(\int_{\mathbb R^4 } f(x,z) \epsilon^{-1} \psi(\epsilon^{-1}(y-w))&\mathcal Z^N_{t_N,u_N}(dy,dz) \mathcal Z^N_{s_N,t_N}(dx,dw) \\&- \int_{\mathbb R^2} f(x,z) \mathcal Z^N_{s_N,u_N}(dx,dz)\bigg)^2 \bigg]=0.
    \end{align*}
    Henceforth we suppress the subscripts on $s_N,t_N,u_N$ but it should be understood that they depend on $N$ in all prelimiting expressions.
    Multiply out the expectation and we obtain 
    \begin{align*}
        \bigg(\int_{\mathbb R^4 } &f(x,z) \epsilon^{-1} \psi(\epsilon^{-1}(y-w))\mathcal Z^N_{t,u}(dy_1,dz) \mathcal Z^N_{s,t}(dx,dy_2) - \int_{\mathbb R^2} f(x,z) \mathcal Z^N_{s,u}(dx,dz)\bigg)^2 \\ &= \bigg( \int_{\mathbb R^4} f(x,z) \bigg[ \epsilon^{-1} \psi(\epsilon^{-1} (y-w)) - N^{1/2} \ind_{\{y=w\}} \bigg]\mathcal Z^N_{t,u}(dy,dz) \mathcal Z^N_{s,t}(dx,dw)\bigg)^2 \\&= \int_{\mathbb R^8} \prod_{j=1,2} f(x_j,z_j) \bigg[ \epsilon^{-1} \psi(\epsilon^{-1} (y_j-w_j)) - N^{1/2} \ind_{\{y_j=w_j\}}\bigg] \prod_{j=1,2} \mathcal Z^N_{t,u}(dy_j,dz_j) \mathcal Z^N_{s,t}(dx_j,dw_j).
    \end{align*} We want to take expectation of this and let $N\to \infty$. Expand the product $\prod_{j=1,2} \big[ \epsilon^{-1} \psi(\epsilon^{-1} (y_j-w_j)) - N^{1/2} \ind_{\{y_j=w_j\}}\big]$ into four separate terms. From Proposition \ref{limmom2}, taking the limit of the expectation of each of those four terms is actually immediate, and in total, the limit equals 
    \begin{align*}
        \int_{\mathbb R^8} &\prod_{j=1,2} f(x_j,z_j) \bigg[ \epsilon^{-1} \psi(\epsilon^{-1} (y_j-w_j)) - \delta_0(y_j-w_j)\bigg] \mathbf E\bigg[\prod_{j=1,2} U_{t,u}(y_j,z_j) U_{s,t}(x_j,w_j)\bigg] \prod_{j=1,2} dy_jdz_jdx_jdw_j \\&= \mathbf E \bigg[ \bigg(\int_{\mathbb R^4 } f(x,z) \epsilon^{-1} \psi(\epsilon^{-1}(y-w)) U_{t,u}(y,z) U_{s,t}(x,w)dydzdxdw - \int_{\mathbb R^2} f(x,z) U_{s,u}(x,z)dxdz\bigg)^2 \bigg],
    \end{align*}
    where $U_{s,t}$ are the propagators for \eqref{she}, not necessarily defined on the same probability space as the environment $\omega$. Then we can just take $\epsilon\to 0$ in this quantity, and the result would be zero, since this is a known statement for the continuum object $U$ given by \eqref{she}.
\\
\\
    \textbf{Step 4. Verification of (3).} Given any limit point $(M_{s,t})_{(s,t)\in \mathbb Q_{\le}^2},$ the limiting moment formula of Proposition \ref{limmom} and the uniform integrability guaranteed by Proposition \ref{c} immediately gives the verification of Item (3). That proposition only considers $(s,t)=(0,t)$, but the general case is immediate from the time-stationarity of the field $\omega_{t,x}$, which implies $\mathcal Z^N_{a,b} \stackrel{(d)}{=} \mathcal Z^N_{a+h,b+h}$ for all (fixed) values of $a,b,h.$ This completes the proof of the theorem.
\end{proof}

Finally, we prove \eqref{endp}

\begin{proof}[Proof of \eqref{endp}]
    The point is to use Theorem \ref{mr} in conjunction with Proposition \ref{limmom}. Suppose $y^N\to y$ with $y^N\in N^{-1/2}\Bbb Z$ and $s_N,t_N\in N^{-1}\Bbb Z$ with $s_N\le t_N$. For bounded functions $\phi,\psi$ on $\Bbb R$, define random variables $A_N^{\phi, \psi} := N^{-1} \sum_{(a,b)\in \Bbb Z^2} \mathcal \phi(a)\mathcal Z^N_{s_N,t_N}(a,b)\psi(b).$ Let $\varrho_\delta^y(b):=\delta^{-1}\varrho(\delta^{-1}(b-y)),$ for some $\varrho\in C_c^\infty(\Bbb R)$ with $\int \varrho=1$. By Proposition \ref{limmom},  $$\lim_{\delta\to 0} \limsup_{N\to\infty} \Bbb E\big[ \big( A_N^{\phi, \varrho_\delta^y} - A_N^{\phi,N^{1/2}\ind_{\{y^N\}}}\big)^2\big]=0,$$ for any fixed $\phi \in C_c^0(\Bbb R)$. This is because the inner limit can be computed explicitly using Proposition \ref{limmom}, then it can be written in terms of the propagators $U_{s,t}$ for \eqref{she}, similarly to the proof Theorem \ref{mr} just above. Now the claim follows easily using the result of Theorem \ref{mr}, and the fact that $\int_{\Bbb R^2} \phi(a) U_{s,t} (a,b) \varrho^y_\delta (b) dadb \stackrel{L^2}{\to} \int_\Bbb R \phi(a) U_{s,t}(a,y) da$ as $\delta\to 0$.
\end{proof}

\bibliographystyle{alpha}
\bibliography{ref.bib}

\begin{thebibliography}{FMN16}

\bibitem[AKQ14]{AKQ14a}
Tom Alberts, Konstantin Khanin, and Jeremy Quastel.
\newblock The intermediate disorder regime for directed polymers in dimension {$1+1$}.
\newblock {\em Ann. Probab.}, 42(3):1212--1256, 2014.

\bibitem[BD96]{Bry2}
Wlodzimierz Bryc and Amir Dembo.
\newblock Large deviations and strong mixing.
\newblock In {\em Annales de l'IHP Probabilit{\'e}s et statistiques}, volume 32 (4), pages 549--569, 1996.

\bibitem[Bra05]{bradley}
Richard~C. Bradley.
\newblock {Basic Properties of Strong Mixing Conditions. A Survey and Some Open Questions}.
\newblock {\em Probability Surveys}, 2(none):107 -- 144, 2005.

\bibitem[Bri69]{Bri}
David~R Brillinger.
\newblock The calculation of cumulants via conditioning.
\newblock {\em Annals of the Institute of Statistical Mathematics}, 21:215--218, 1969.

\bibitem[Bry92]{Bry1}
Wlodzimierz Bryc.
\newblock On large deviations for uniformly strong mixing sequences.
\newblock {\em Stochastic processes and their applications}, 41(2):191--202, 1992.

\bibitem[Bry93]{Bry3}
Wlodzimierz Bryc.
\newblock A remark on the connection between the large deviation principle and the central limit theorem.
\newblock {\em Statist. Probab. Lett.}, 18(4):253--256, 1993.

\bibitem[Com17]{Com17}
Francis Comets.
\newblock {\em Directed polymers in random environments}, volume 2175 of {\em Lecture Notes in Mathematics}.
\newblock Springer, Cham, 2017.
\newblock Lecture notes - Saint-Flour, 2016.

\bibitem[Cor12]{Cor12}
Ivan Corwin.
\newblock The {K}ardar-{P}arisi-{Z}hang equation and universality class.
\newblock {\em Random Matrices Theory Appl.}, 1(1):1130001, 76, 2012.

\bibitem[CS17]{CS}
Ajay Chandra and Hao Shen.
\newblock Moment bounds for spdes with non-gaussian fields and application to the wong-zakai problem.
\newblock {\em Electron. J. Probab.}, 2017.

\bibitem[DN07]{Dou2}
Paul Doukhan and Michael~H. Neumann.
\newblock Probability and moment inequalities for sums of weakly dependent random variables, with applications.
\newblock {\em Stochastic Process. Appl.}, 117(7):878--903, 2007.

\bibitem[EH19]{disc1}
Dirk Erhard and Martin Hairer.
\newblock Discretisation of regularity structures.
\newblock {\em Ann. Inst. Henri Poincar\'e{} Probab. Stat.}, 55(4):2209--2248, 2019.

\bibitem[FMN16]{Fer}
Valentin F{\'e}ray, Pierre-Lo{\"\i}c M{\'e}liot, and Ashkan Nikeghbali.
\newblock {\em Mod-Phi Convergence: Normality Zones and Precise Deviations}.
\newblock Springer, 2016.

\bibitem[GIP15]{GIP15}
Massimiliano Gubinelli, Peter Imkeller, and Nicolas Perkowski.
\newblock Paracontrolled distributions and singular {PDE}s.
\newblock {\em Forum Math. Pi}, 3:e6, 75, 2015.

\bibitem[GP16]{Gub}
Massimiliano Gubinelli and Nicolas Perkowski.
\newblock Kpz reloaded.
\newblock {\em Communications in Mathematical Physics}, 349(1):165–269, November 2016.

\bibitem[GT19]{GT}
Yu~Gu and Li-Cheng Tsai.
\newblock Another look into the {W}ong-{Z}akai theorem for stochastic heat equation.
\newblock {\em Ann. Appl. Probab.}, 29(5):3037--3061, 2019.

\bibitem[Hai13]{Hai13}
Martin Hairer.
\newblock Solving the {KPZ} equation.
\newblock {\em Ann. of Math. (2)}, 178(2):559--664, 2013.

\bibitem[Hai14]{Hai14}
M.~Hairer.
\newblock A theory of regularity structures.
\newblock {\em Invent. Math.}, 198(2):269--504, 2014.

\bibitem[HH85]{HH85}
David~A. {Huse} and Christopher~L. {Henley}.
\newblock {Pinning and roughening of domain walls in Ising systems due to random impurities}.
\newblock {\em Phys. Rev. Lett.}, 54(25):2708--2711, June 1985.

\bibitem[HL18]{HL18}
Martin Hairer and Cyril Labb\'{e}.
\newblock Multiplicative stochastic heat equations on the whole space.
\newblock {\em J. Eur. Math. Soc. (JEMS)}, 20(4):1005--1054, 2018.

\bibitem[HM18]{disc2}
M.~Hairer and K.~Matetski.
\newblock Discretisations of rough stochastic {PDE}s.
\newblock {\em Ann. Probab.}, 46(3):1651--1709, 2018.

\bibitem[HP15]{HP15}
Martin Hairer and \'{E}tienne Pardoux.
\newblock A {W}ong-{Z}akai theorem for stochastic {PDE}s.
\newblock {\em J. Math. Soc. Japan}, 67(4):1551--1604, 2015.

\bibitem[HQ18]{HQ18}
Martin Hairer and Jeremy Quastel.
\newblock {A class of growth models rescaling to KPZ}.
\newblock In {\em Forum of Mathematics, Pi}, volume~6, page~e3. Cambridge University Press, 2018.

\bibitem[HS17]{HS}
Martin Hairer and Hao Shen.
\newblock A central limit theorem for the {KPZ} equation.
\newblock {\em Ann. Probab.}, 45(6B):4167--4221, 2017.

\bibitem[Jan88]{Jan}
Svante Janson.
\newblock Normal convergence by higher semiinvariants with applications to sums of dependent random variables and random graphs.
\newblock {\em The Annals of Probability}, pages 305--312, 1988.

\bibitem[KN06]{Dou1}
Raoul~S. Kallabis and Michael~H. Neumann.
\newblock An exponential inequality under weak dependence.
\newblock {\em Bernoulli}, 12(2):333--350, 2006.

\bibitem[Par25]{Par24a}
Shalin Parekh.
\newblock Moment-based approach for two erratic kpz scaling limits.
\newblock {\em Electron J. Probab. (to appear)}, 2025.

\bibitem[PR19]{PR19}
Nicolas Perkowski and Tommaso~Cornelis Rosati.
\newblock The {KPZ} equation on the real line.
\newblock {\em Electron. J. Probab.}, 24:Paper No. 117, 56, 2019.

\bibitem[Qua11]{Qua11}
Jeremy Quastel.
\newblock Introduction to {KPZ}.
\newblock {\em Current developments in mathematics}, 2011(1), 2011.

\bibitem[Rao99]{Rao}
B.~L. S.~P. Rao.
\newblock Bounds for {$r$}th order joint cumulant under {$r$}th order strong mixing.
\newblock {\em Statist. Probab. Lett.}, 43(4):427--431, 1999.

\bibitem[Tsa24]{Ts24}
Li-Cheng Tsai.
\newblock Stochastic heat flow by moments.
\newblock {\em 2410.14657}, 2024.

\end{thebibliography}
\end{document}